\documentclass[a4paper,12pt]{article}
\usepackage{tikz}
\usetikzlibrary{matrix,arrows,decorations.markings}
\usepackage{amsmath,amsfonts,amssymb,amsthm,url,textcomp}
\usepackage{csquotes}
\usepackage{a4wide}
\usepackage[numbers]{natbib}
\usepackage{fancyhdr}
\pagestyle{fancy}
\lhead{A.~Bors}
\rhead{Small automorphism orbits}
\usepackage{anyfontsize}
\usepackage{hyperref}
\usepackage{hhline}
\usepackage{leftidx}
\usepackage{mdframed}
\usepackage{enumitem}

\allowdisplaybreaks


\newtheorem{theorem}{Theorem}\numberwithin{theorem}{section}
\newtheorem{definition}[theorem]{Definition}
\newtheorem{lemma}[theorem]{Lemma}
\newtheorem{corollary}[theorem]{Corollary}
\newtheorem{proposition}[theorem]{Proposition}
\newtheorem{notation}[theorem]{Notation}

\newtheorem{question}[theorem]{Question}
\newtheorem{problem}[theorem]{Problem}
\newtheorem{theoremm}{Theorem}\numberwithin{theoremm}{subsection}
\newtheorem{deffinition}[theoremm]{Definition}
\newtheorem{lemmma}[theoremm]{Lemma}

\newtheorem{propposition}[theoremm]{Proposition}

\numberwithin{theoremmm}{subsubsection}

\theoremstyle{remark}
\newtheorem{remark}[theorem]{Remark}

\newcommand{\Rad}{\operatorname{Rad}}
\newcommand{\Aut}{\operatorname{Aut}}
\newcommand{\Alt}{\operatorname{Alt}}
\newcommand{\PSL}{\operatorname{PSL}}

\newcommand{\ord}{\operatorname{ord}}
\newcommand{\Sym}{\operatorname{Sym}}

\newcommand{\C}{\operatorname{C}}

\newcommand{\Soc}{\operatorname{Soc}}
\newcommand{\Inn}{\operatorname{Inn}}
\newcommand{\id}{\operatorname{id}}
\newcommand{\N}{\operatorname{N}}

\newcommand{\e}{\mathrm{e}}

\newcommand{\GL}{\operatorname{GL}}

\newcommand{\Mod}[1]{\ (\textup{mod}\ #1)}

\newcommand{\F}{\operatorname{F}}

\newcommand{\IN}{\mathbb{N}}

\newcommand{\Sp}{\operatorname{Sp}}
\newcommand{\perm}{\mathrm{perm}}

\newcommand{\IF}{\mathbb{F}}

\newcommand{\End}{\operatorname{End}}

\newcommand{\IZ}{\mathbb{Z}}

\newcommand{\Hom}{\operatorname{Hom}}

\newcommand{\maol}{\operatorname{maol}}

\newcommand{\Exp}{\operatorname{Exp}}
\newcommand{\Dih}{\operatorname{Dih}}
\newcommand{\cent}{\mathrm{cent}}
\newcommand{\mccl}{\operatorname{mccl}}

\begin{document}

\title{Finite groups with only small automorphism orbits}

\author{Alexander Bors\thanks{Johann Radon Institute for Computational and Applied Mathematics (RICAM), Altenbergerstra{\ss}e 69, 4040 Linz, Austria. \newline E-mail: \href{mailto:alexander.bors@ricam.oeaw.ac.at}{alexander.bors@ricam.oeaw.ac.at} \newline The author is supported by the Austrian Science Fund (FWF), project J4072-N32 \enquote{Affine maps on finite groups}. \newline 2010 \emph{Mathematics Subject Classification}: Primary: 20D45. Secondary: 20D60, 20F24. \newline \emph{Key words and phrases:} Finite groups, Group actions, Automorphism orbits, Inhomogeneous structures}}

\date{\today}

\maketitle

\abstract{We study finite groups $G$ such that the maximum length of an orbit of the natural action of the automorphism group $\Aut(G)$ on $G$ is bounded from above by a constant. Our main results are the following: Firstly, a finite group $G$ only admits $\Aut(G)$-orbits of length at most $3$ if and only if $G$ is cyclic of one of the orders $1$, $2$, $3$, $4$ or $6$, or $G$ is the Klein four group or the symmetric group of degree $3$. Secondly, there are infinitely many finite ($2$-)groups $G$ such that the maximum length of an $\Aut(G)$-orbit on $G$ is $8$. Thirdly, the order of a $d$-generated finite group $G$ such that $G$ only admits $\Aut(G)$-orbits of length at most $c$ is explicitly bounded from above in terms of $c$ and $d$. Fourthly, a finite group $G$ such that all $\Aut(G)$-orbits on $G$ are of length at most $23$ is solvable.}

\section{Introduction}\label{sec1}

The study of \emph{structures} $X$ (in the model-theoretic sense, i.e., sets endowed with operations and relations) that are \enquote{highly symmetrical}, expressed through transitivity assumptions on natural actions of the automorphism group $\Aut(X)$, has a long and rich history, during which various strong theories have been built and beautiful results have been obtained. As examples, we mention vertex-transitive graphs \cite[Definition 4.2.2, p.~85]{BW79a}, block-transitive designs \cite{CP93a,CP93b}, and finite flag-transitive projective planes \cite{Tha03a}.

When $X$ is a group $G$, the assumption that $\Aut(G)$ acts transitively on $G$ is not interesting, as only the trivial group satisfies it. Therefore, weaker conditions have been proposed and studied, such as the following (assuming that $G$ is \emph{finite}):
\begin{enumerate}
\item \enquote{$\Aut(G)$ admits exactly $c$ orbits on $G$.} for some given, small constant $c$. For $c=2$, it is not difficult to show that this is equivalent to $G$ being nontrivial and elementary abelian. For results concerning $c\in\{3,4,5,6,7\}$, see the papers \cite{BD16a,DGB17a,LM86a,Stro02a} by various authors.
\item \enquote{$\Aut(G)$ admits at least one orbit of length at least $\rho|G|$ on $G$.} for some given constant $\rho\in\left(0,1\right]$. For example, it is known that if $\rho>\frac{18}{19}$, then $G$ is necessarily solvable \cite[Theorem 1.1.2(1)]{Bor19a}.
\item \enquote{For each element order $o$ in $G$, $\Aut(G)$ acts transitively on elements of order $o$ in $G$.}. In other words, $\Aut(G)$ is \enquote{as transitive as possible} in view of the fact that automorphisms must preserve the orders of elements. Such finite groups $G$ are called \emph{AT-groups} and are studied extensively by Zhang in \cite{Zha92a}.
\end{enumerate}

In this paper, we are not concerned with such \enquote{highly homogeneous} finite groups, but rather with finite groups $G$ that are \enquote{highly inhomogeneous} in the sense that they only admit small $\Aut(G)$-orbits (i.e., of constantly bounded length). There is also some relevant literature in this context, most notably the 1984 paper \cite{RW84a} by Robinson and Wiegold, in which they characterize general (not necessarily finite) such groups structurally \cite[Theorem 1]{RW84a} and provide, for each prime $p$, an example of an infinite $p$-group $G_p$ of nilpotency class $2$ and of exponent $p^2$ such that $\Aut(G_p)$ is uncountably infinite but only has orbits of length at most $p^2(p-1)^2$ on $G_p$ \cite[Proposition 3 and the Remark after its proof]{RW84a}. Another noteworthy result in this regard is that there are uncountable abelian groups with only two automorphisms, see e.g.~\cite[Theorem II]{Gro57a}.

However, finite groups behave quite differently to infinite groups in many regards, and by a result of Ledermann and B.H.~Neumann \cite{LN56a}, as the order of a finite group $G$ tends to $\infty$, so does the order of $\Aut(G)$. In other words: \enquote{Large finite groups have many automorphisms.} Based on this result, one might conjecture that even the following stronger assertion holds: \enquote{For finite groups $G$, as $|G|\to\infty$, the maximum length of an $\Aut(G)$-orbit on $G$ tends to $\infty$ as well}. This, however, is not true, see our Theorem \ref{mainTheo}(2).

Throughout the rest of this paper, we denote by $\maol(G)$ the maximum length of an $\Aut(G)$-orbit on the finite group $G$. Moreover, $\exp$ and $\log$ denote the natural exponential and logarithm function respectively (with base the Euler constant $\e$). We now state our main results:

\begin{theorem}\label{mainTheo}
The following statements hold:
\begin{enumerate}
\item For each finite group $G$, the following are equivalent:
\begin{enumerate}
\item $\maol(G)\leqslant3$.
\item $G$ is isomorphic to one of the following: $\IZ/m\IZ$ with $m\in\{1,2,3,4,6\}$, $(\IZ/2\IZ)^2$, $\Sym(3)$.
\end{enumerate}
In particular, there are only finitely many finite groups $G$ with $\maol(G)\leqslant3$.
\item There are infinitely many finite $2$-groups $G$ with $\maol(G)=8$.
\item For each pair $(c,d)$ of positive integers and every $d$-generated finite group $G$ with $\maol(G)\leqslant c$, we have that $\log{|G|}$ is at most
\[
1.01624d\cdot\left(A(c,d)+1\right)\cdot\left(\frac{\log{A(c,d)}}{\log{2}}+1\right)+\frac{1}{2}(7+\log{c})\log{c}
\]
where
\[
A(c,d):=c^{d+\frac{1}{2}(7+\log{c})\left({d \choose 2}+\frac{d}{2\log{2}}\cdot(7+\log{c})\log{c}\right)}.
\]
\item A finite group $G$ with $\maol(G)\leqslant23$ is solvable.
\end{enumerate}
\end{theorem}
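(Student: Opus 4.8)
The forward implication is a finite check: for each of the seven groups one computes $\Aut(G)$ and its orbits. For instance $\Aut((\IZ/2\IZ)^2)\cong\GL_2(\IF_2)\cong\Sym(3)$ acts transitively on the three involutions, and as $\Sym(3)$ is complete its $\Aut$-orbits are its conjugacy classes, of sizes $1,2,3$; in every listed case $\maol(G)\le 3$. For the converse I argue by elimination. If $G$ is cyclic, then $\Aut(\IZ/m\IZ)\cong(\IZ/m\IZ)^{\times}$ is transitive on the $\phi(m)$ generators, so $\phi(m)\le 3$ forces $m\in\{1,2,3,4,6\}$; if $G$ is elementary abelian $(\IZ/p\IZ)^{k}$, transitivity of $\GL_k(\IF_p)$ on non-zero vectors forces $p^{k}-1\le 3$, giving $(\IZ/2\IZ)^2$; every remaining abelian $G$ (e.g.\ $\IZ/2\IZ\times\IZ/4\IZ$ or $\IZ/2\IZ\times\IZ/6\IZ$) is ruled out by exhibiting an $\Aut(G)$-orbit of length at least $4$ among its elements of maximal order. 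If $G$ is non-abelian, then $\Inn(G)\le\Aut(G)$ forces every conjugacy class to have size at most $3$; such a group has boundedly small $|G:Z(G)|$, hence bounded order, and a short inspection of the finitely many small non-abelian groups leaves only $\Sym(3)$.

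\textbf{Part (2).} I would produce an explicit infinite family of finite $2$-groups in the spirit of the Robinson--Wiegold construction \cite{RW84a}. Take class-$2$ groups $G_n=\langle a_1,\dots,a_n\rangle$ whose squares and commutators are forced into a fixed central subgroup $Z_0$ of order $4$, arranged so that $[G_n,x]\le Z_0$ for every $x$. Then the central automorphisms $x\mapsto xz$ ($z\in Z_0$) together with an inverting automorphism move a suitable element exactly through the set $x^{\pm 1}Z_0$ of size $8$, giving $\maol(G_n)\ge 8$ for all $n$. For the matching upper bound I would determine $\Aut(G_n)$ as an extension of a linear group over $\IF_2$ acting on $G_n/\Phi(G_n)\cong(\IZ/2\IZ)^{n}$ by the central automorphisms, and check that none of these can enlarge an orbit beyond $8$. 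The obstacle is designing the commutator-and-power relations so that both bounds hold uniformly in $n$, i.e.\ so that the orbit length stabilises at exactly $8$ while the group order grows without bound.

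\textbf{Part (3).} The key initial estimate is that $\Aut(G)$ acts freely on the $\Aut(G)$-orbit of a generating $d$-tuple $(g_1,\dots,g_d)$ (an automorphism is determined by the images of generators), and that orbit lies in $\Orb(g_1)\times\cdots\times\Orb(g_d)$, whence $|\Aut(G)|\le c^{d}$ and in particular $|G:Z(G)|=|\Inn(G)|\le c^{d}$. It remains to bound $|Z(G)|$. I would first bound the exponent: the $\Aut(G)$-action on the cyclic subgroup generated by an element of maximal order, combined with the induced action on $G/[G,G]$, yields $\log\exp(G)\le\tfrac12(7+\log c)\log c$, the term appearing in the statement. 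With the exponent controlled, I pass to an explicit Ledermann--Neumann-type inequality: refining the lower central series to a characteristic series with elementary abelian factors, the $i$-th factor is spanned by images of weight-$i$ commutators of the generators and so has rank at most the number of such basic commutators (producing the $d$ and $\binom{d}{2}$ in the formula), while $\maol(G)\le c$ bounds each factor's field size through a sharp rank-versus-orbit estimate for $\GL$-actions (the source of the constant $1.01624$). Combining the bounded exponent, the layer ranks, and the bound on the number of layers gives $A(c,d)$ and then the stated bound on $\log|G|$; the bookkeeping yielding the explicit constants is the main calculation.

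\textbf{Part (4).} I prove the contrapositive: a non-solvable $G$ has $\maol(G)\ge 24$. Let $R$ be the solvable radical and $\bar G=G/R$; since $R$ is characteristic, $\Aut(G)$ acts on $\bar G$ and on its socle $\Soc(\bar G)=T_1\times\cdots\times T_k$, a product of non-abelian simple groups, inducing at least all inner automorphisms, and every orbit on this characteristic section has length at most $\maol(G)$. As any proper subgroup of a non-abelian simple group has index at least $5$, each non-trivial conjugacy class of $T_i$ has size at least $5$; hence if $k\ge 2$ a diagonal element supported on two factors has conjugacy class of size at least $5^{2}=25>23$, a contradiction. If $k=1$ then $\bar G$ is almost simple with socle $T$, and I invoke the classification of finite simple groups to obtain $\maol(T)\ge 24$ for every non-abelian simple $T$, the value $24$ being attained only by $T=\Alt(5)$ (its $24$ elements of order $5$ form a single $\Aut(\Alt(5))=\Sym(5)$-orbit, as do the $24$ order-$5$ elements of the cover $\operatorname{SL}_2(5)$). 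The main obstacle is that $\maol(T)$ need not \emph{a priori} be realised inside $G$: I must show that $\Aut(G)$ induces enough automorphisms on the characteristic section $T$ to force an orbit of length at least $24$, in particular supplying the outer automorphism of $\Alt(5)$. I would settle this by a minimal-counterexample argument reducing to $R=1$, and then checking the few remaining almost-simple configurations directly.
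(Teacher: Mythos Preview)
Your sketch has substantive gaps in Parts (1), (3) and (4); Part (2) is too schematic to assess.

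\textbf{Part (1).} The non-abelian case is where the real content lies, and your argument there is incorrect. From ``every conjugacy class has size at most $3$'' you conclude ``$|G:\zeta G|$ is bounded, hence $|G|$ is bounded''. This inference fails: extraspecial $2$-groups of order $2^{1+2n}$ have every conjugacy class of size $1$ or $2$, yet $|G:\zeta G|=2^{2n}$ is unbounded. (What is true is $|G:\zeta G|\le 3^{d(G)}$, but $d(G)$ is not controlled.) The paper's route is much more delicate. For $\maol(G)=2$ it first shows $G$ is a Miller $2$-group with $|G'|=2$, $\zeta G$ cyclic, and $G/G'$ elementary abelian, and then uses a counting argument on ``standard tuples'' (lifts of bases of $G/G'$) sharing the same power-commutator data to force a contradiction. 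For $\maol(G)=3$ it rules out the nilpotent case by similar counting, then shows the element orders of $\Inn(G)$ are exactly $\{1,2,3\}$, invokes the Brandl--Shi classification to get $G/\zeta G\cong\Sym(3)$, and finally splits off a direct $\Sym(3)$ factor. None of this reduces to ``finitely many small groups''.

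\textbf{Part (3).} Your outline misidentifies the source of every explicit constant. The term $\tfrac12(7+\log c)\log c$ is not an exponent bound; it is the Guralnick--Mar{\'o}ti bound on $\log|G'|$ coming from $\mccl(G)\le c$. The constant $1.01624$ has nothing to do with $\GL$-actions; it is the Rosser--Schoenfeld bound $\vartheta(x)<1.01624x$ on the first Chebyshev function, used to control the product of primes dividing $|G:G'|$. The quantity $A(c,d)$ arises by counting standard tuples in $G$ (lifts of standard generating tuples of $G/G'$) versus the number of their possible power--automorphism--commutator types, and comparing with the bound $c^{d(G/G')}$ on an $\Aut(G)$-orbit of tuples; this first bounds the primes dividing $|G:G'|$, then the exponents in $G/G'$, via lower bounds on $|\Aut(G/G')|$ from the Hillar--Rhea formula.

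\textbf{Part (4).} You correctly isolate the obstacle---that an $\Aut(T)$-orbit of length $24$ on $T\cong\Alt(5)$ need not be realised inside $G$---but your proposed fix (minimal counterexample with $R=1$) does not work, since $\maol$ does not pass to quotients. The paper's resolution is different and is the key idea you are missing: working with conjugacy class lengths (which \emph{do} pass to quotients), one gets $G/\Rad(G)\cong\Alt(5)$; then, because every nontrivial class in $\Alt(5)$ has size $\ge 12>23/2$, each $G$-class meets each $\Rad(G)$-coset in at most one element, forcing $\Rad(G)=\zeta G$ and making the Sylow subgroups of $G$ abelian. Hence $G'\cap\zeta G=1$, so $G=\zeta G\times G'$ with $G'\cong\Alt(5)$, and now $\maol(G)\ge\maol(\Alt(5))=24$ directly.
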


We note that the constant $23$ in Theorem \ref{mainTheo}(4) is optimal, as $\maol(\Alt(5))=24$.

\section{Some preparations}\label{secPrep}

In this section, we list some notation that will be used throughout the paper, and we discuss a few basic facts concerning power-commutator presentations, central automorphisms and finite groups without nontrivial solvable normal subgroups.

\subsection{Notation}\label{subsecPrep1}

We denote by $\IN$ the set of natural numbers (including $0$) and by $\IN^+$ the set of positive integers. For a prime power $q$, the notation $\IF_q$ stands for the finite field with $q$ elements. The identity function on a set $X$ is denoted by $\id_X$. The Euler totient function will be denoted by $\phi$ throughout and is to be distinguished from the symbol $\varphi$ reserved for group homomorphisms. The kernel of a group homomorphism $\varphi$ is denoted by $\ker(\varphi)$, and the order of an element $g$ of a group $G$ by $\ord(g)$, sometimes also by $\ord_G(g)$ for greater clarity. When $g$ and $h$ are elements of a group $G$, then we denote by $[g,h]:=g^{-1}h^{-1}gh$ the commutator of $g$ and $h$, and for subsets $X,Y\subseteq G$, the notation $[X,Y]$ stands for the subgroup of $G$ generated by the commutators $[x,y]$ with $x\in X$ and $y\in Y$. We always denote the quotient of a group $G$ by a normal subgroup $N$ by $G/N$ and reserve the notation $X\setminus Y$ for the set-theoretic difference of the sets $X$ and $Y$. The index of a subgroup $H$ in a group $G$ is written $|G:H|$. If $x_1,\ldots,x_n$ are pairwise distinct variables, then $\F(x_1,\ldots,x_n)$ stands for the free group generated by $x_1,\ldots,x_n$. When $A$ is an abelian group, then the semidirect product $A\rtimes\IZ/2\IZ$, where the generator of $\IZ/2\IZ$ acts on $A$ by inversion, is called the \emph{generalized dihedral group over $A$} and will be denoted by $\Dih(A)$. The symmetric group on a set $X$ is denoted by $\Sym(X)$, and for $n\in\IN^+$, the symmetric and alternating group of degree $n$ are written $\Sym(n)$ and $\Alt(n)$ respectively. All group actions discussed in this paper are on the right, and when $\varphi: G\rightarrow\Sym(X)$ is an action of the group $G$ on the set $X$, then for $g\in G$ and $x\in X$, we write $x^g$ shorthand for $\varphi(g)(x)$, and we write $x^G$ for the full orbit of $x$ under $G$. The exponent (i.e., least common multiple of the element orders) of a finite group $G$ is denoted by $\Exp(G)$ (to be distinguished from the notation $\exp$ reserved for the natural exponential function), and the smallest size of a generating subset of $G$ is denoted by $d(G)$. The notation $\Rad(G)$ is used for the solvable radical (largest solvable normal subgroup) of a finite group $G$, and $\Soc(G)$ is used for the socle (product of all the minimal nontrivial normal subgroups) of $G$; see also Subsection \ref{subsecPrep3}. The center of a group $G$ is denoted by $\zeta G$, and $G':=[G,G]$ denotes the commutator subgroup of $G$. The inner automorphism group of a group $G$ is written $\Inn(G)$. If $G$ and $H$ are groups, then $\End(G)$ denotes the set (monoid) of endomorphisms of $G$, and $\Hom(G,H)$ denotes the set of group homomorphisms $G\rightarrow H$.

\subsection{Power-commutator presentations of finite solvable groups}\label{subsecPrep2}

A group $G$ is called \emph{polycyclic} if and only if it admits a \emph{polycyclic series}, that is, a subnormal series $G=G_1\unrhd G_2\unrhd\cdots\unrhd G_n\unrhd G_{n+1}=\{1_G\}$ such that all the factors $G_i/G_{i+1}$, with $i\in\{1,\ldots,n\}$, are cyclic. A generating tuple $(g_1,\ldots,g_n)$ of $G$ is called a \emph{polycyclic generating sequence of $G$} if and only if, setting $G_i:=\langle g_i,g_{i+1},\ldots,g_n\rangle$ for $i=1,\ldots,n+1$, the subgroup series $G=G_1\geqslant G_2\geqslant\cdots\geqslant G_n\geqslant G_{n+1}=\{1_G\}$ is a polycyclic series in $G$. Clearly, every polycyclic group is solvable, and all \emph{finite} solvable groups are polycyclic. If $G$ is a polycylic group and $(g_1,\ldots,g_n)$ is a polycyclic generating sequence of $G$, then with respect to the generating tuple $(g_1,\ldots,g_n)$, $G$ can be represented by a so-called \emph{polycyclic presentation}, see e.g.~\cite[Theorem 8.8, p.~279]{HEO05a}. For our purposes, it will be more convenient to work with a variant of polycyclic presentations called \emph{power-commutator presentations}. Assume that $G$ is a \emph{finite} polycylic group (the finiteness assumption is not essential, but makes the situation a bit simpler) and that $(g_1,\ldots,g_n)$ is a polycyclic generating sequence of $G$. Then with respect to the generating tuple $(g_1,\ldots,g_n)$, the group $G$ has a power-commutator presentation of the form
\begin{align*}
G=\langle x_1,\ldots,x_n \mid &x_i^{e_i}=x_{i+1}^{a_{i,i+1}}\cdots x_n^{a_{i,n}}\text{ for }i=1,\ldots,n; \\
&[x_i,x_j]=x_{i+1}^{b_{i,j,i+1}}\cdots x_n^{b_{i,j,n}}\text{ for }1\leqslant i<j\leqslant n\rangle,
\end{align*}
where for $i=1,\ldots,n$, the formal generator $x_i$ corresponds to the group element $g_i$, and $e_i$ is the so-called \emph{relative order} of $g_i$, i.e., the order of $g_iG_{i+1}=g_i\langle x_{i+1},\ldots,x_n\rangle$ in $G/G_{i+1}$. Moreover, the exponents $a_{i,k}$ for $i=1,\ldots,n$ and $k=i+1,\ldots,n$, and the exponents $b_{i,j,k}$ for $1\leqslant i<j\leqslant n$ and $k=i+1,\ldots,n$ are integers in $\{0,1,\ldots,e_k-1\}$. For more details on polycylic groups, see \cite[Chapter 8]{HEO05a}.

\subsection{Central automorphisms}\label{subsecPrep3}

If $G$ is a group and $f$ is a group homomorphism $G\rightarrow \zeta G$, then it is easy to check that the function $\varphi_f:G\rightarrow G$, $g\mapsto gf(g)$, is a group endomorphism of $G$, and that conversely, every endomorphism of $G$ which leaves each coset of $\zeta G$ in $G$ set-wise invariant is of this form. Such endomorphisms of $G$ are called \emph{central}. Moreover, a central endomorphism $\varphi_f$ of a group $G$ has trivial kernel if and only the neutral element $1_G$ is the only element of $\zeta G$ which is mapped to its own inverse by $f$. In the case of finite groups $G$, the central endomorphisms of $G$ with trivial kernel are the \emph{central automorphisms} of $G$, which form a subgroup of $\Aut(G)$ denoted by $\Aut_{\cent}(G)$.

\subsection{Finite semisimple groups}\label{subsecPrep4}

Throughout this paper, the term \enquote{semisimple group} denotes a group without nontrivial solvable normal subgroups; for finite groups $G$, this is equivalent to the condition that the solvable radical $\Rad(G)$ is trivial. Note that since the class of solvable groups is closed under group extensions, for every finite group $G$, the quotient $G/\Rad(G)$ is semisimple. Moreover, for finite semisimple groups $H$, the structure of $H$ is controlled by the socle $\Soc(H)$. More precisely, $\Soc(H)$ is a direct product of nonabelian finite simple groups, and $H$ acts faithfully on $\Soc(H)$ via conjugation, so that, up to isomorphism, $H$ may be viewed as a subgroup of $\Aut(\Soc(H))$ containing $\Inn(\Soc(H))$; see also \cite[result 3.3.18, p.~89]{Rob96a}.

\section{Finite groups \texorpdfstring{$G$}{G} with \texorpdfstring{$\maol(G)\leqslant3$}{maol(G)<=3}}\label{sec2}

This section is concerned with the proof of Theorem \ref{mainTheo}(1). We will go through the three cases $\maol(G)=1,2,3$ separately, but first, we prove the following simple lemma, which will be used frequently:

\begin{lemma}\label{simpleLem}
The following hold:
\begin{enumerate}
\item Let $G_1,\ldots,G_k$ be finite groups. Then
\[
\maol(\prod_{k=1}^n{G_k})\geqslant\prod_{k=1}^n{\maol(G_k)}\geqslant\max\{\maol(G_k)\mid k=1,\ldots,n\}.
\]
\item For every finite abelian group $G$, we have $\maol(G)\geqslant\phi(\Exp(G))$, where $\phi$ denotes the Euler totient function.
\item Let $G$ be a finite nilpotent group. Then $\maol(G)=\prod_p{\maol(G_p)}$ where the index $p$ ranges over the primes and $G_p$ denotes the (unique) Sylow $p$-subgroup of $G$.
\end{enumerate}
\end{lemma}

\begin{proof}
For statement (1): This holds since $\prod_{k=1}^n{\Aut(G_k)}$ embeds into $\Aut(\prod_{k=1}^n{G_k})$ via \enquote{component-wise mapping}.

For statement (2): First, note that if $G$ is cyclic, then $\maol(G)=\phi(\Exp(G))$, as $\phi(\Exp(G))=\phi(|G|)$ is just the number of generators of $G$. If $G$ is a general finite abelian group, then by the structure theorem for finite abelian groups, $G$ has a cyclic direct factor of order $\Exp(G)$, and the asserted inequality follows by statement (1).

For statement (3): This is clear since $\Aut(G)$ is isomorphic to the direct product $\prod_p{\Aut(G_p)}$ via \enquote{component-wise mapping}.
\end{proof}

\subsection{Finite groups \texorpdfstring{$G$}{G} with \texorpdfstring{$\maol(G)=1$}{maol(G)=1}}\label{subsec2P1}

The following proposition, whose proof is given for completeness, is easy and well-known:

\begin{propposition}\label{maol1Prop}
Let $G$ be a finite group. The following are equivalent:
\begin{enumerate}
\item $\maol(G)=1$.
\item $\Aut(G)$ is trivial.
\item $G\cong\IZ/m\IZ$ with $m\in\{1,2\}$.
\end{enumerate}
\end{propposition}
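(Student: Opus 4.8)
The plan is to prove the cyclic chain of implications $(1)\Rightarrow(2)\Rightarrow(3)\Rightarrow(1)$, with essentially all of the content concentrated in the step $(2)\Rightarrow(3)$.

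For $(1)\Rightarrow(2)$ I would use that the natural action of $\Aut(G)$ on $G$ is faithful: an automorphism fixing every element of $G$ is $\id_G$. Hence the hypothesis $\maol(G)=1$ says that every $\Aut(G)$-orbit is a singleton, so every $\alpha\in\Aut(G)$ fixes all of $G$ and thus equals $\id_G$; that is, $\Aut(G)$ is trivial.

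For $(2)\Rightarrow(3)$ I would argue in three moves. First, since $\Inn(G)\cong G/\zeta G$ embeds into $\Aut(G)$, triviality of $\Aut(G)$ forces $G=\zeta G$, i.e.\ $G$ is abelian. Second, I would invoke Lemma \ref{simpleLem}(2), which for abelian $G$ gives $1=\maol(G)\geqslant\phi(\Exp(G))$; this forces $\Exp(G)\in\{1,2\}$, so $G$ is an elementary abelian $2$-group, say $G\cong(\IZ/2\IZ)^n$. (Alternatively, without the lemma, one notes that the inversion map $g\mapsto g^{-1}$ is an automorphism of the abelian group $G$, so triviality of $\Aut(G)$ forces $g=g^{-1}$ for all $g$, yielding the same conclusion.) Third, I would rule out $n\geqslant2$: for such $n$ one has $\Aut((\IZ/2\IZ)^n)\cong\GL_n(\IF_2)$, which contains a nontrivial element—most cleanly the permutation automorphism transposing two coordinates of a basis—contradicting $(2)$. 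Hence $n\leqslant1$ and $G\cong\IZ/m\IZ$ with $m\in\{1,2\}$.

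For $(3)\Rightarrow(1)$ it suffices to verify directly that the trivial group and $\IZ/2\IZ$ admit only the identity automorphism, so every $\Aut(G)$-orbit is a singleton and $\maol(G)=1$. The argument involves no genuine obstacle; the one point requiring a moment's care is the final move of $(2)\Rightarrow(3)$, where one must not merely assert that $G$ is abelian but actually exhibit a nontrivial symmetry of $(\IZ/2\IZ)^n$ for $n\geqslant2$, the basis transposition being the most transparent choice.
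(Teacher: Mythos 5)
Your proposal is correct and takes essentially the same route as the paper: $(1)\Rightarrow(2)$ via faithfulness of the $\Aut(G)$-action, $(2)\Rightarrow(3)$ via $\Inn(G)\cong G/\zeta G$ and the inversion map to reduce to $G\cong(\IZ/2\IZ)^n$, and $(3)\Rightarrow(1)$ by direct verification. The only (immaterial) difference is the final step of $(2)\Rightarrow(3)$: the paper rules out $n\geqslant2$ by citing Lemma \ref{simpleLem}(1) together with $\maol((\IZ/2\IZ)^2)=3$, whereas you exhibit a nontrivial automorphism (a basis transposition in $\GL_n(\IF_2)$) directly, which works equally well.
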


\begin{proof}
For \enquote{(1) $\Rightarrow$ (2)}: Assume that $\maol(G)=1$, and let $\alpha\in\Aut(G)$. Then for each $g\in G$, we have $g^{\alpha}\in g^{\Aut(G)}=\{g\}$, so that $g^{\alpha}=g$ and $\alpha=\id_G$. Since $\alpha\in\Aut(G)$ was arbitrary, it follows that $\Aut(G)=\{\id_G\}$, as required.

For \enquote{(2) $\Rightarrow$ (3)}: Assume that $\Aut(G)$ is trivial. Since $G/\zeta G \cong \Inn(G)\leqslant\Aut(G)$, it follows that $G=\zeta G$, i.e., $G$ is abelian. Writing $G$ additively, we find that the inversion on $G$, $-\id_G$, is an automorphism of $G$, and so $-\id_G=\id_G$, i.e., $G$ is of exponent $2$, and thus $G\cong(\IZ/2\IZ)^d$ for some $d\in\IN$. But if $d\geqslant2$, then by Lemma \ref{simpleLem}(1),
\[
\maol(G)\geqslant\maol((\IZ/2\IZ)^2)=3>1,
\]
so that $\Aut(G)$ must be nontrivial, a contradiction. Hence $d\in\{0,1\}$, as required.

For \enquote{(3) $\Rightarrow$ (1)}: Assume that $G$ is of order at most $2$. Then since $\Aut(G)$ is contained in a point stabilizer in $\Sym(G)$, every element of $G$ must be fixed by all permutations in $\Aut(G)$, whence $\maol(G)=1$, as required.
\end{proof}

\subsection{Finite groups \texorpdfstring{$G$}{G} with \texorpdfstring{$\maol(G)=2$}{maol(G)=2}}\label{subsec2P2}

These groups are less trivial to deal with than the ones with $\maol$-value $1$. Note that if $G$ is a finite group with $\maol(G)=2$, then $\alpha^2=\id_G$ for every automorphism $\alpha$ of $G$. Hence $\Aut(G)$ is of exponent $2$, i.e., $\Aut(G)$ is an elementary abelian $2$-group. We will need a few results on finite groups with abelian automorphism group.

\begin{deffinition}\label{millerGroupDef}
A nonabelian finite group $G$ with abelian automorphism group is called a \emph{Miller group}.
\end{deffinition}

This terminology, taken from the survey paper \cite{KY18a}, is in honor of G.A.~Miller, who gave the first example of such a group (of order $64$) in 1913 \cite{Mil13a} (see also \cite[Section 3, (3.1)]{KY18a}). Since then, a rich theory of Miller groups with many beautiful results and examples has emerged. We will need the following:

\begin{propposition}\label{millerGroupProp}
Let $G$ be a Miller group. Then the following hold:
\begin{enumerate}
\item $G$ is nilpotent of class $2$.
\item Every Sylow subgroup of $G$ has abelian automorphism group.
\item If $G$ is a $p$-group for some prime $p$ and $|G'|>2$, then $G'$ is not cyclic.
\end{enumerate}
\end{propposition}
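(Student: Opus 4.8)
The plan is to dispatch (1) and (2) quickly and then concentrate on (3).

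For (1), I would use the isomorphism $\Inn(G)\cong G/\zeta G$ together with $\Inn(G)\le\Aut(G)$, both already invoked in the proof of Proposition~\ref{maol1Prop}. Since $\Aut(G)$ is abelian, so is its subgroup $\Inn(G)$, whence $G/\zeta G$ is abelian; this is equivalent to $G'\le\zeta G$, i.e.\ to $G$ being nilpotent of class at most $2$, and since $G$ is nonabelian the class is exactly $2$. For (2), I would note that a nilpotent finite group is the internal direct product of its (characteristic) Sylow subgroups $G_p$, so that $\Aut(G)\cong\prod_p\Aut(G_p)$ by the \enquote{component-wise} isomorphism recorded in Lemma~\ref{simpleLem}(3). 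Each $\Aut(G_p)$ is then a direct factor of the abelian group $\Aut(G)$, hence abelian, which is the assertion.

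For (3) I would argue by contradiction, assuming $G$ is a Miller $p$-group with $G'=\langle c\rangle$ cyclic of order $p^n>2$, and exhibit two automorphisms of $G$ that do not commute. The central object is the \emph{commutator pairing}
\[
b\colon V\times V\to G',\quad b(\bar x,\bar y)=[x,y],\quad V:=G/\zeta G,
\]
which, because $G$ is of class $2$ (so $G'\le\zeta G$ and commutators are bi-multiplicative), is a well-defined, alternating, bi-multiplicative map; moreover it is \emph{nondegenerate}, since its radical is precisely $\zeta G/\zeta G$ by definition of the center. Thus $V$ carries a nondegenerate alternating form valued in the cyclic group $G'\cong\IZ/p^n\IZ$. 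Every $\alpha\in\Aut(G)$ fixes $\zeta G$ set-wise and so induces a form-compatible automorphism of $(V,b)$, and conversely I would show that a sufficiently large group of isometries of $(V,b)$ lifts back to $\Aut(G)$.

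The point is then that, once $|G'|=p^n>2$, this isometry group is nonabelian. Indeed, nondegeneracy forces $V$ to contain a hyperbolic pair, on which the induced \enquote{symplectic} maps generate a subgroup resembling $\operatorname{SL}_2$ over $\IZ/p^n\IZ$, which is nonabelian for every $p^n>2$; any two such non-commuting isometries, once lifted, yield non-commuting elements of $\Aut(G)$, contradicting that $\Aut(G)$ is abelian. I would treat the case $\zeta G\supsetneq G'$ (where some of the relevant isometries are realized by \emph{central} automorphisms $g\mapsto g\,f(g)$ with $f\in\Hom(G/G',\zeta G)$, cf.\ Subsection~\ref{subsecPrep3}) and the case $\zeta G=G'$ (the \enquote{extraspecial-like} case, where the symplectic maps are the relevant ones) in parallel.

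The hard part, and the reason the threshold is $2$ rather than $1$, is the lifting step: an isometry of the commutator form $b$ need not respect the $p$-power map of $G$, so one must choose lifts carefully and, for $p=2$, account for the quadratic refinement of $b$ induced by squaring. It is exactly this refinement that makes the relevant isometry group collapse to an abelian (orthogonal, small-rank) group when $|G'|=2$, so that cyclic $G'$ of order $2$ genuinely cannot be excluded; verifying that for $p^n>2$ enough isometries survive the lifting to generate a nonabelian subgroup of $\Aut(G)$ — in particular in the delicate boundary cases $p^n\in\{3,4\}$, where scalar automorphisms alone do not suffice because $(\IZ/p^n\IZ)^\times=\{\pm1\}$ — is where the real work lies.
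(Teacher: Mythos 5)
Your treatment of (1) and (2) is correct and is exactly the paper's: (1) from $\Inn(G)\cong G/\zeta G$ being a subgroup of the abelian group $\Aut(G)$, and (2) from the decomposition $\Aut(G)\cong\prod_p\Aut(G_p)$ for nilpotent $G$.

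For (3), however, there is a genuine gap: what you give is a program, and the step you explicitly defer (\enquote{where the real work lies}) is precisely the entire content of the statement. Two concrete points. First, in a Miller group \emph{every} automorphism is central: if $\Aut(G)$ is abelian, then any $\alpha\in\Aut(G)$ commutes with each inner automorphism $\conj_g$, and since $\alpha\conj_g\alpha^{-1}=\conj_{\alpha(g)}$, this forces $\alpha(g)g^{-1}\in\zeta G$ for all $g\in G$. Consequently the natural map $\Aut(G)\to\operatorname{Isom}(V,b)$ that your argument is built around is \emph{trivial} for a Miller group; your plan of lifting two non-commuting isometries therefore really amounts to lifting a single nontrivial isometry (any non-central automorphism already fails to commute with some element of $\Inn(G)\neq 1$, so $\Aut(G)$ is nonabelian). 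Second, whether any nontrivial isometry of the commutator form lifts to an automorphism of $G$ is a nontrivial obstruction problem for \emph{all} primes $p$, not just $p=2$: a lift must also be compatible with the power maps relating $G/\zeta G$, $\zeta G$ and $G'$, and the very existence of Miller groups shows that such liftings can fail wholesale, so one cannot simply assert that \enquote{enough isometries survive}. (Note also that $V=G/\zeta G$ need not be homocyclic, so the hyperbolic-pair/$\operatorname{SL}_2(\IZ/p^n\IZ)$ picture itself requires justification.) The paper avoids all of this by citing the survey \cite{KY18a} (statement (4) at the end of Section 1) for the \emph{purely nonabelian} case and then supplying the one missing reduction: if $G$ is not purely nonabelian, write $G=G_0\times A$ with $G_0$ purely nonabelian and $A$ abelian; since $\Aut(G_0)$ embeds into $\Aut(G)$, the group $G_0$ is again a Miller $p$-group with $|G_0'|=|G'|>2$, so $G_0'$ is not cyclic, and $G'\cong G_0'$. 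If you want a self-contained proof along your lines, you would need to actually construct a non-central automorphism under the hypothesis that $G'$ is cyclic of order greater than $2$ (for instance via the Adney--Yen correspondence $\Aut_{\cent}(G)\cong\Hom(G/G',\zeta G)$ for purely nonabelian $p$-groups), which is substantial work that the proposal does not carry out; as written, the core of (3) is missing.
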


\begin{proof}
For statement (1): This holds since for every group $H$, being nilpotent of class at most $2$ is equivalent to the commutativity of $\Inn(H)$; see also \cite[Section 1]{KY18a}.

For statement (2): This is clear since $\Aut(G)$ is the direct product of the automorphism groups of the Sylow subgroups of $G$ (see also the proof of Lemma \ref{simpleLem}(3)).

For statement (3): By \cite[statement (4) at the end of Section 1]{KY18a}, this holds if one additionally assumes that $G$ is \emph{purely nonabelian}, i.e., $G$ has no nontrivial abelian direct factor. However, this additional assumption can be dropped, for if $G$ is not purely nonabelian, then $G=G_0\times A$ where $G_0$ is purely nonabelian and $A$ is abelian. Since $\Aut(G_0)$ embeds into $\Aut(G)$, we have that $G_0$ is also a Miller $p$-group, and $|G'|=|G_0'|>2$, so $G_0'$ is not cyclic. But $G'\cong G_0'$, whence $G'$ is not cyclic.
\end{proof}

We can now prove the following lemma, which will be used in our proof of the classification of finite groups $G$ with $\maol(G)=2$ (see Proposition \ref{maol2Prop} below).

\begin{lemmma}\label{maol2Lem}
Let $G$ be a finite group with $\maol(G)=2$. Then the following hold:
\begin{enumerate}
\item If $G$ is abelian, then $G\cong\IZ/m\IZ$ with $m\in\{3,4,6\}$.
\item If $G$ is nonabelian, then
\begin{enumerate}
\item $G$ is a Miller $2$-group.
\item $\zeta G$ is cyclic.
\item $|G'|=2$.
\item $|\zeta G|>2$.
\item $G/G'$ is an elementary abelian $2$-group.
\end{enumerate}
\end{enumerate}
\end{lemmma}

\begin{proof}
For statement (1): By Lemma \ref{simpleLem}(3), $\maol(G)=\prod_p{\maol(G_p)}$, where the index $p$ ranges over the primes and $G_p$ denotes the Sylow $p$-group of $G$. Moreover, if $G_p$ is nontrivial, then by Lemma \ref{simpleLem}(2), $\maol(G_p)\geqslant\phi(p)=p-1$. It follows that $G_p$ is trivial unless $p\in\{2,3\}$, i.e., $G$ is a finite abelian $\{2,3\}$-group. Consider the following cases:
\begin{enumerate}
\item Case: $G$ is a $2$-group. By Lemma \ref{simpleLem}(2), $\maol(G)\geqslant\phi(\Exp(G))$, and thus $\Exp(G)\leqslant4$. But $\Exp(G)=2$ is impossible, since then $G\cong(\IZ/2\IZ)^d$ for some $d\geqslant 2$, and thus $\maol(G)\geqslant\maol((\IZ/2\IZ)^2)=3$ by Lemma \ref{simpleLem}(1). Hence $\Exp(G)=4$. If $G$ has more than one direct factor $\IZ/4\IZ$ in its decomposition into primary cyclic groups, then by Lemma \ref{simpleLem}(1), $\maol(G)\geqslant\maol((\IZ/4\IZ)^2)=12$, a contradiction. Hence $G\cong(\IZ/2\IZ)^d\times\IZ/4\IZ$ for some $d\in\IN$. If $d\geqslant1$, then by Lemma \ref{simpleLem}(1), $\maol(G)\geqslant\maol(\IZ/2\IZ\times\IZ/4\IZ)=4$, a contradiction. It follows that $G\cong\IZ/4\IZ$.
\item Case: $G$ is a $3$-group. Again, we have $\maol(G)\geqslant\phi(\Exp(G))$ by Lemma \ref{simpleLem}(2), which implies that $\Exp(G)=3$, whence $G\cong(\IZ/3\IZ)^d$ for some $d\in\IN^+$. If $d\geqslant2$, then by Lemma \ref{simpleLem}(1), $\maol(G)\geqslant\maol((\IZ/3\IZ)^2)=8$, a contradiction. Hence $G\cong\IZ/3\IZ$.
\item Case: $G$ is neither a $2$- nor a $3$-group. Then $G=G_2\times G_3$ with $G_p$ a nontrivial abelian $p$-group for $p\in\{2,3\}$. By Lemma \ref{simpleLem}(3), we have
\[
2=\maol(G)=\maol(G_2)\cdot\maol(G_3).
\]
Hence $(\maol(G_2),\maol(G_3))$ is either $(2,1)$ or $(1,2)$. But the former is impossible, since by Proposition \ref{maol1Prop}, there are no nontrivial finite $3$-groups with $\maol$-value $1$. Hence $\maol(G_2)=1$ and $\maol(G_3)=2$. It follows by Proposition \ref{maol1Prop} and the previous Case that $G_2\cong\IZ/2\IZ$ and $G_3\cong\IZ/3\IZ$, whence $G\cong\IZ/6\IZ$.
\end{enumerate}

For statement (2,a): As noted at the beginning of this subsection, $\Aut(G)$ is an elementary abelian $2$-group, so $G$ is certainly a Miller group. By Proposition \ref{millerGroupProp}(1), $G$ is nilpotent, so we can write $G=\prod_p{G_p}$ where the index $p$ ranges over the primes and $G_p$ denotes the Sylow $p$-subgroup of $G$. By Lemma \ref{simpleLem}(3), $2=\maol(G)=\prod_p{\maol(G_p)}$, and so $\maol(G_p)=2$ for exactly one prime $p$, and $\maol(G_{\ell})=1$ for all primes $\ell\not=p$. We claim that $G$ is a $p$-group. Indeed, otherwise, in view of Proposition \ref{maol1Prop}, $|G|$ has exactly two distinct prime divisors, and, more precisely, $p>2$ and $\pi(G)=\{2,p\}$, with $G_2\cong\IZ/2\IZ$. Since $G$ is nonabelian, it follows that $G_p$ is nonabelian, whence $G_p$ has an (inner) automorphism of order $p$, which implies that $\maol(G)\geqslant\maol(G_p)\geqslant p>2$, a contradiction. So $G$ is indeed a $p$-group for some prime $p$, and again, since $G$ is nonabelian, $2=\maol(G)\geqslant p$, whence $p=2$. This concludes the proof of statement (2,a).

For statement (2,b): Assume that $\zeta G$ is not cyclic, so that we have an embedding $\iota:(\IZ/2\IZ)^2\hookrightarrow\zeta G$. Since $G$ is of nilpotency class $2$ (by Proposition \ref{millerGroupProp}(1)), the central quotient $G/\zeta G$ is an abelian $2$-group, whence we also have a projection $\pi:G/\zeta G \twoheadrightarrow\IZ/2\IZ$. There are four distinct homomorphisms $\varphi:\IZ/2\IZ\rightarrow(\IZ/2\IZ)^2$, and by composition, we get four distinct homomorphisms
\[
f:G\overset{\textrm{can.}}\twoheadrightarrow G/\zeta G\overset{\pi}\twoheadrightarrow\IZ/2\IZ\overset{\varphi}\rightarrow(\IZ/2\IZ)^2\overset{\iota}\hookrightarrow\zeta G.
\]
For each such homomorphism $f:G\rightarrow\zeta G$, we have that $\zeta G\leqslant\ker(f)$, and so the neutral element $1_G$ is the only element of $\zeta G$ inverted by $f$. We may thus consider the associated central automorphism $\alpha_f:G\rightarrow G,g\mapsto gf(g)$. Now fix an element $g\in G$ outside the (index $2$) kernel of the composition
\[
G\overset{\textrm{can.}}\twoheadrightarrow G/\zeta G\overset{\pi}\twoheadrightarrow\IZ/2\IZ.
\]
Then the images of $g$ under the four mentioned central automorphisms $\alpha_f$ are pairwise distinct, which implies that $\maol(G)\geqslant4$, a contradiction. This concludes the proof of statement (2,b).

For statement (2,c): Recall that by Proposition \ref{millerGroupProp}(1), $G$ is nilpotent of class $2$, whence $G'\leqslant\zeta G$, and so $G'$ is cyclic by statement (2,b). Proposition \ref{millerGroupProp}(3) now implies that $|G'|=2$, as required.

For statement (2,d): Assume, aiming for a contradiction, that $|\zeta G|=2$. Then, since $G$ is nilpotent of class $2$ by Proposition \ref{millerGroupProp}(1), we have $G'=\zeta G\cong\IZ/2\IZ$, so that $G$ is an extraspecial $2$-group. By \cite[Theorem 1(c)]{Win72a}, the induced action of $\Aut(G)$ on $G/\zeta G\cong\IF_2^{2n}$ corresponds to the one of an orthogonal group $O^{\epsilon}_{2n}(2)$, for some $\epsilon\in\{+,-\}$ (depending on the isomorphism type of $G$). In any case, this implies that $3\mid|\Aut(G)|$, so that $\Aut(G)$ contains an element of order $3$ by Cauchy's theorem and thus $\maol(G)\geqslant3$, a contradiction. This concludes the proof of statement (2,d).

For statement (2,e): Assume, aiming for a contradiction, that $G/G'$ is \emph{not} an elementary abelian $2$-group. Then we have a projection $\pi:G/G'\twoheadrightarrow\IZ/4\IZ$. There are four endomorphisms $\varphi$ of $\IZ/4\IZ$, and by composition, we obtain four distinct homomorphisms
\[
f:G\overset{\textrm{can.}}\twoheadrightarrow G/G'\overset{\pi}\twoheadrightarrow\IZ/4\IZ\overset{\varphi}\rightarrow\IZ/4\IZ\hookrightarrow\zeta G.
\]
By the three facts that $G'$ is nontrivial, $G'\leqslant\zeta G$ and $\zeta G$ is a cyclic $2$-group, each such homomorphism $f:G\rightarrow\zeta G$ has the property that any nontrivial element of $\zeta G$ is mapped under $f$ to an element of smaller order; in particular, $1_G$ is the only element of $\zeta G$ which is inverted by $f$. It follows that each such homomorphism $f$ induces a central automorphism $\alpha_f:G\rightarrow G,g\mapsto gf(g)$, and any element $g\in G$ which gets mapped under the composition
\[
G\overset{\textrm{can.}}\twoheadrightarrow G/G'\overset{\pi}\twoheadrightarrow\IZ/4\IZ
\]
to a generator of $\IZ/4\IZ$ assumes four distinct images under these central automorphisms $\alpha_f$. It follows that $\maol(G)\geqslant4$, a contradiction, which concludes the proof of statement (2,e).
\end{proof}

We are now ready to classify the finite groups $G$ with $\maol(G)=2$:

\begin{propposition}\label{maol2Prop}
Let $G$ be a finite group. The following are equivalent:
\begin{enumerate}
\item $\maol(G)=2$.
\item $G\cong\IZ/m\IZ$ for some $m\in\{3,4,6\}$.
\end{enumerate}
\end{propposition}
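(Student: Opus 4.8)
The plan is to prove the two implications separately, with the reverse implication $(2)\Rightarrow(1)$ being routine and the forward implication $(1)\Rightarrow(2)$ carrying the real content.

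For $(2)\Rightarrow(1)$ I would first record the general formula $\maol(\IZ/m\IZ)=\phi(m)$. Indeed, $\Aut(\IZ/m\IZ)\cong(\IZ/m\IZ)^{\times}$ acts by exponentiation, so the orbit of an element of order $d\mid m$ has length $\phi(d)$, all generators of the order-$d$ subgroup being reached since $(\IZ/m\IZ)^{\times}\twoheadrightarrow(\IZ/d\IZ)^{\times}$; as $\phi(d)\mid\phi(m)$ and hence $\phi(d)\leqslant\phi(m)$ for every divisor $d$ of $m$, the maximum is attained at $d=m$. Since $\phi(3)=\phi(4)=\phi(6)=2$, each of $\IZ/3\IZ,\IZ/4\IZ,\IZ/6\IZ$ has $\maol$-value exactly $2$, settling this direction.

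For $(1)\Rightarrow(2)$, assume $\maol(G)=2$. If $G$ is abelian, Lemma \ref{maol2Lem}(1) already gives $G\cong\IZ/m\IZ$ with $m\in\{3,4,6\}$, so it suffices to rule out the nonabelian case. Suppose then, for contradiction, that $G$ is nonabelian. By Lemma \ref{maol2Lem}(2), $G$ is a Miller $2$-group with $\zeta G$ cyclic, $|G'|=2$, $|\zeta G|>2$ and $G/G'$ elementary abelian. I would first pin down $\zeta G$ exactly: writing $G'=\langle z\rangle$ and letting $c$ generate $\zeta G$, the assumption that $G/G'$ is elementary abelian forces $c^2\in G'$, and $\ord(c)>2$ then gives $c^2=z$, so $\ord(c)=4$ and $\zeta G\cong\IZ/4\IZ$. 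Consequently $G/\zeta G$ is elementary abelian of even rank $2n$ (the commutator map induces a nondegenerate alternating form on it), and $|G|=2^{2n+2}$ with $n\geqslant1$.

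The heart of the matter is to show that these constraints force $3\mid|\Aut(G)|$, for then Cauchy's theorem supplies an automorphism of order $3$ and hence an $\Aut(G)$-orbit of length at least $3$, contradicting $\maol(G)=2$ exactly as in Lemma \ref{maol2Lem}(2,d). To produce the factor $3$, I would pass to $V:=G/G'\cong\IF_2^{2n+1}$ and observe that the squaring map $g\mapsto g^2\in G'\cong\IF_2$ descends to a quadratic form $Q$ on $V$ (well-definedness using $|G'|=2$), whose polar form is the commutator form and whose radical is the line $\zeta G/G'$, on which $Q$ is nonzero because $c^2=z\neq1$. Thus $\Aut(G)$ acts on $(V,Q)$ through the orthogonal group $\Oo_{2n+1}(2)\cong\Sp_{2n}(2)$, whose order is divisible by $3$ for every $n\geqslant1$. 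The main obstacle is then the lifting step: showing that $\Aut(G)$ in fact realizes a subgroup of order divisible by $3$ here, i.e.\ that $G$ is a central product $E\circ_{\langle z\rangle}\IZ/4\IZ$ with $E$ extraspecial and that a suitable isometry lifts to an automorphism of $G$ — the almost-extraspecial analogue of Winter's theorem \cite{Win72a} invoked in Lemma \ref{maol2Lem}(2,d). Concretely I would realize the order-$3$ element of $\Oo_{2n+1}(2)$ as an isometry fixing the radical and acting as an order-$3$ symplectic transformation on a single hyperbolic plane of $G/\zeta G$, then lift it by prescribing its effect on a symplectic generating set and verifying the power-commutator relations; the freedom to adjust lifts by $z$, which shifts $Q$-values by $1$, is exactly what makes all symplectic transformations liftable here, unlike in the extraspecial case. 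The one delicate low-rank point, that $\Oo_2^{+}(2)$ has order $2$, is sidestepped by the isomorphism $\IZ/4\IZ\circ D_8\cong\IZ/4\IZ\circ Q_8$, which lets one present the relevant rank-one factor via $Q_8$, whose automorphism group $\cong\Sym(4)$ visibly contains an element of order $3$. Once this order-$3$ automorphism is in hand the contradiction is immediate, and the proof concludes.
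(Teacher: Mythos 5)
Your proposal is correct, but it takes a genuinely different route from the paper in the nonabelian case. The paper never pins down the isomorphism type of a putative nonabelian example: after invoking Lemma \ref{maol2Lem}(2) it writes $G/G'\cong(\IZ/2\IZ)^d$, notes $d\geqslant3$ (since $|\zeta G|>2$ forces $|G|>8$), and runs a counting argument --- the number of \enquote{standard tuples} (tuples projecting to $\IF_2$-bases of $G/G'$), divided by the number of possible power-commutator tuples, forces an $\Aut(G)$-orbit on $G^d$ of length at least $\prod_{j=1}^{d}(2^j-1)$, while $\maol(G)=2$ caps such orbits at $2^d$, and $2^d\geqslant\prod_{j=1}^{d}(2^j-1)$ fails for $d\geqslant3$. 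You instead push the structure theory to completion: $\zeta G\cong\IZ/4\IZ$, so $G$ is almost extraspecial ($E\circ\IZ/4\IZ$ with $E$ extraspecial), squaring descends to a quadratic form on $G/G'$ with isometry group $\Oo_{2n+1}(2)\cong\Sp_{2n}(2)$, and you manufacture an order-$3$ automorphism by lifting, mirroring the Winter-theorem argument already used in Lemma \ref{maol2Lem}(2,d). Both arguments are sound; what each buys is this: yours identifies the obstruction concretely and yields the exact structure of the hypothetical counterexample (your $Q_8$ remark handles $n=1$ cleanly, since an order-$3$ automorphism of $Q_8$ extends to the central product acting trivially on the other factor), whereas the paper's counting argument needs no lifting lemma and is deliberately reusable --- it is repeated almost verbatim with $p=3$ inside the proof of Lemma \ref{maol3Lem2} and generalized into the main machinery of Section \ref{sec4}.

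One small correction to your lifting step: the freedom that makes every symplectic transformation of $G/\zeta G$ liftable is adjustment of lifts by $c$ (the order-$4$ generator of $\zeta G$), not by $z$. Since $z^2=1$, replacing a lift $y$ by $yz$ leaves $y^2$ unchanged --- this is precisely why $Q$ is well defined on $G/G'$, and why in the genuinely extraspecial case (where lifts can only be adjusted within $\zeta G=G'=\langle z\rangle$) only the orthogonal group $\Oo^{\epsilon}_{2n}(2)$ lifts. By contrast $(yc)^2=y^2c^2=y^2z$ shifts the $Q$-value by $1$, which is the mechanism you need. With that substitution your sketch goes through: commutators of lifts depend only on cosets modulo $\zeta G$, squares of lifts can be corrected by $c$, and checking the resulting assignment against the power-commutator presentation closes the argument.
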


\begin{proof}
The implication \enquote{(2) $\Rightarrow$ (1)} is easy, so we focus on proving \enquote{(1) $\Rightarrow$ (2)}. By Lemma \ref{maol2Lem}(1), it suffices to show that $G$ is abelian. So, working toward a contradiction, let us assume that $G$ is nonabelian. Then we can use all the structural information on $G$ displayed in Lemma \ref{maol2Lem}(2).

Write $G/G'\cong(\IZ/2\IZ)^d$ with $d\in\IN^+$. Note that $d\geqslant3$, as otherwise, $|G|=|G'|\cdot|G/G'|\leqslant 2\cdot 4=8$, which implies that $|\zeta G|=2$, contradicting Lemma \ref{maol2Lem}(2,d). Let us call a $d$-tuple $(g_1,\ldots,g_d)\in G^d$ a \emph{standard tuple in $G$} if and only if it projects to an $\IF_2$-basis of $G/G'$ under the canonical projection $G\twoheadrightarrow G/G'$ (we remark that this notion of a \enquote{standard tuple} will also appear in the next subsection, Subsection \ref{subsec2P3}, and it will be introduced and studied in greater generality in Section \ref{sec4}).

Since $|G'|=2$, the number of standard tuples in $G$ is exactly
\[
2^d\cdot\prod_{i=0}^{d-1}{(2^d-2^i)}.
\]
For each standard tuple $(g_1,\ldots,g_d)$ in $G$, the associated \emph{power-commutator tuple} is defined to be the following $(d+{d \choose 2})$-tuple with entries in $G'$:
\[
(g_1^2,g_2^2,\ldots,g_d^2,[g_1,g_2],[g_1,g_3],\ldots,[g_1,g_d],[g_2,g_3],[g_2,g_4],\ldots,[g_2,g_d],\ldots,[g_{d-1},g_d]).
\]
We say that two standard tuples in $G$ are \emph{equivalent} if and only if they have the same power-commutator tuple. Now, if $c$ denotes the unique nontrivial element of $G'$, then for every standard tuple $(g_1,\ldots,g_d)$ in $G$, the $(d+1)$-tuple $(g_1,\ldots,g_d,c)$ is a polycyclic generating sequence of $G$. Moreover, if $(h_1,\ldots,h_d)$ is a standard tuple in $G$ which is equivalent to $(g_1,\ldots,g_d)$, then the two polycyclic generating sequences $(g_1,\ldots,g_d,c)$ and $(h_1,\ldots,h_d,c)$ of $G$ induce the same power-commutator presentation of $G$ (this is because since $c$ is central in $G$, one has $[g_i,c]=[h_i,c]=1$ for all $i=1,\ldots,d$), so that there exists an automorphism $\alpha$ of $G$ with $g_i^{\alpha}=h_i$ for $i=1,\ldots,d$. This shows that equivalent standard tuples lie in the same orbit of the component-wise action of $\Aut(G)$ on $G^d$.

Note that since $|G'|=2$, the number of distinct power-commutator tuples of standard tuples in $G$, and thus the number of equivalence classes of standard tuples in $G$, is at most $2^{d+{d \choose 2}}$. It follows that there is an equivalence class of standard tuples in $G$ which is of size at least
\begin{align*}
&\frac{2^d\cdot\prod_{i=0}^{d-1}{(2^d-2^i)}}{2^{d+{d \choose 2}}}=\frac{\prod_{i=0}^{d-1}{(2^d-2^i)}}{2^{{d \choose 2}}}=\frac{2^{0+1+2+\cdots+d-1}\cdot\prod_{i=0}^{d-1}{(2^{d-i}-1)}}{2^{{d \choose 2}}}= \\
&\prod_{i=0}^{d-1}{(2^{d-i}-1)}=\prod_{j=1}^d{(2^j-1)}.
\end{align*}
In particular, the component-wise action of $\Aut(G)$ on $G^d$ has an orbit of length at least $\prod_{j=1}^d{(2^j-1)}$. However, since $\maol(G)=2$, no orbit of the action of $\Aut(G)$ on $G^d$ can be of length larger than $2^d$. It follows that
\[
2^d\geqslant\prod_{j=1}^d{(2^j-1)},
\]
which does not hold for any $d\geqslant3$, a contradiction.
\end{proof}

\subsection{Finite groups \texorpdfstring{$G$}{G} with \texorpdfstring{$\maol(G)=3$}{maol(G)=3}}\label{subsec2P3}

We begin by proving some properties of finite groups $G$ with $\maol(G)=3$ which will be crucial for the subsequent discussion:

\begin{lemmma}\label{maol3Lem1}
Let $G$ be a finite group with $\maol(G)=3$. Then the following hold:
\begin{enumerate}
\item If $G$ is abelian, then $G\cong(\IZ/2\IZ)^2$.
\item If $G$ is nonabelian, then the following hold:
\begin{enumerate}
\item The set of element orders of $\Aut(G)$ is contained in $\{1,2,3\}$ (in particular, $\Aut(G)$ is solvable).
\item $G$ is a $\{2,3\}$-group (in particular, $G$ is solvable).
\end{enumerate}
\end{enumerate}
\end{lemmma}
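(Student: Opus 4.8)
The plan is to treat the three assertions in turn, using the nilpotent decomposition of $\maol$ from Lemma~\ref{simpleLem} for part~(1), a cycle-structure analysis of individual automorphisms for~(2a), and a central-Sylow splitting argument for~(2b).

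For part~(1), since $G$ is abelian it is nilpotent, so Lemma~\ref{simpleLem}(3) gives $3=\maol(G)=\prod_p\maol(G_p)$. As $3$ is prime, exactly one Sylow subgroup $G_{p_0}$ satisfies $\maol(G_{p_0})=3$ and all others have $\maol=1$; by Proposition~\ref{maol1Prop} a nontrivial Sylow subgroup with $\maol=1$ is $\IZ/2\IZ$, which forces the corresponding prime to be $2$. I would then exclude $p_0\geqslant 5$ (where Lemma~\ref{simpleLem}(2) gives $\maol(G_{p_0})\geqslant\phi(p_0)\geqslant 4$) and $p_0=3$ (where an elementary abelian $3$-group has $\maol$ equal to $2$ if cyclic and at least $\maol((\IZ/3\IZ)^2)=8$ otherwise). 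Hence $p_0=2$ and all odd Sylow subgroups are trivial, so it remains to check that an abelian $2$-group $P$ with $\maol(P)=3$ is $(\IZ/2\IZ)^2$: Lemma~\ref{simpleLem}(2) forces $\Exp(P)\in\{2,4\}$, and a short case distinction on the number of cyclic factors (using $\maol((\IZ/2\IZ)^2)=3$, $\maol(\IZ/2\IZ\times\IZ/4\IZ)=4$ and $\maol((\IZ/4\IZ)^2)=12$ together with Lemma~\ref{simpleLem}(1)) eliminates everything except $(\IZ/2\IZ)^2$.

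For part~(2a), the key observation is that for any $\alpha\in\Aut(G)$ every orbit of $\langle\alpha\rangle$ on $G$ lies in an $\Aut(G)$-orbit and so has size at most $3$; since the order of $\alpha$ as a permutation of $G$ is the least common multiple of its cycle lengths, $\ord(\alpha)$ divides $\lcm(1,2,3)=6$. I expect the main obstacle to be excluding $\ord(\alpha)=6$, which I would do as follows. Set $A:=\{g\in G: g^{\alpha^2}=g\}$ and $B:=\{g\in G: g^{\alpha^3}=g\}$, the fixed-point subgroups of $\alpha^2$ and $\alpha^3$; these are genuine subgroups of $G$. Any $g\in G$ has a $\langle\alpha\rangle$-orbit of size $1$, $2$ or $3$: in the first two cases $g^{\alpha^2}=g$, so $g\in A$, and in the last case $g^{\alpha^3}=g$, so $g\in B$. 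Thus $G=A\cup B$, and since a group is never the union of two proper subgroups, either $A=G$ or $B=G$, i.e.\ $\alpha^2=\id_G$ or $\alpha^3=\id_G$, contradicting $\ord(\alpha)=6$. Hence every element order of $\Aut(G)$ lies in $\{1,2,3\}$; by Cauchy's theorem the only primes dividing $|\Aut(G)|$ are then $2$ and $3$, and Burnside's $p^aq^b$-theorem gives solvability.

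For part~(2b), the embedding $G/\zeta G\cong\Inn(G)\leqslant\Aut(G)$ together with~(2a) shows that $G/\zeta G$ has element orders in $\{1,2,3\}$, so (again by Cauchy) $G/\zeta G$ is a $\{2,3\}$-group; it remains to rule out a prime $q\geqslant 5$ dividing $|\zeta G|$. If such a $q$ exists, then since $q\nmid|G/\zeta G|$ the Sylow $q$-subgroup $Q$ of $G$ coincides with that of $\zeta G$ and is therefore central; being a central normal Sylow subgroup it splits off as a direct factor $G=Q\times K$ (Schur--Zassenhaus, the central action forcing the product to be direct). But then Lemma~\ref{simpleLem}(1) and~(2) yield $\maol(G)\geqslant\maol(Q)\geqslant\phi(q)=q-1\geqslant 4$, contradicting $\maol(G)=3$. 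Hence $\zeta G$, and with it $G$, is a $\{2,3\}$-group, and solvability follows once more from Burnside's theorem.
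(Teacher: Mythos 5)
Your proof is correct, and parts (1) and (2a) are essentially the paper's own arguments: the same Sylow-wise factorization $3=\prod_p\maol(G_p)$ with the same case analysis (including the exponent reductions via Lemma~\ref{simpleLem}(2) and the exclusion of $(\IZ/3\IZ)^d$, where $\maol=3^d-1$ is never $3$), and for (2a) the identical key step that $G=\C_G(\alpha^2)\cup\C_G(\alpha^3)$ combined with the fact that no group is the union of two proper subgroups, then Burnside's $p^aq^b$-theorem for the parenthetical. Only in (2b) do you take a slightly different route: the paper first deduces that $G$ itself is solvable (as an extension of the solvable group $\Inn(G)$ by the abelian group $\zeta G$, using (2a)), then invokes Hall's theorem to split off a central Hall $\{2,3\}'$-subgroup; you instead get that $G/\zeta G$ is a $\{2,3\}$-group from $\Inn(G)\leqslant\Aut(G)$ and Cauchy, and handle each prime $q\geqslant5$ separately, observing that the Sylow $q$-subgroup $Q$ then lies in $\zeta G$ and splits off as a direct factor by Schur--Zassenhaus (and since $Q$ is abelian and normal, only the elementary Schur part is needed, not solvability of $G$). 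Both routes terminate in the identical contradiction $\maol(G)\geqslant\maol(Q)\geqslant\phi(\Exp(Q))\geqslant q-1\geqslant4$ via Lemma~\ref{simpleLem}(1,2). Your variant is marginally more elementary in that it avoids Hall subgroups and postpones solvability of $G$ to a final application of Burnside, at the cost of the per-prime splitting argument; the paper's version gets solvability of $G$ for free from (2a) and then dispatches all primes $\geqslant5$ at once. Either way the lemma is fully proved.
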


\begin{proof}
For statement (1): Write $G=\prod_p{G_p}$ where $p$ ranges over the primes and $G_p$ denotes the Sylow $p$-subgroup of $G$. If $G_p$ is nontrivial for some prime $p\geqslant5$, then by Lemma \ref{simpleLem}(1,2),
\[
\maol(G)\geqslant\maol(G_p)\geqslant\phi(p)=p-1\geqslant4>3,
\]
a contradiction. Hence $G=G_2\times G_3$, and by Lemma \ref{simpleLem}(3), we have $3=\maol(G)=\maol(G_2)\cdot\maol(G_3)$. We distinguish two cases:
\begin{enumerate}
\item Case: $\maol(G_2)=3$ and $\maol(G_3)=1$. Then by Proposition \ref{maol1Prop}, $G_3$ is trivial, and so $G$ is an abelian $2$-group. By Lemma \ref{simpleLem}(2), we have $3=\maol(G)\geqslant\phi(\Exp(G))$, which implies that $\Exp(G)\in\{2,4\}$. Distinguish two subcases:
\begin{enumerate}
\item Subcase: $\Exp(G)=2$. Then $G\cong(\IZ/2\IZ)^d$ for some positive integer $d$, and we have $3=\maol(G)=2^d-1$. It follows that $d=2$, i.e., $G\cong(\IZ/2\IZ)^2$.
\item Subcase: $\Exp(G)=4$. Then $G\cong(\IZ/2\IZ)^{d_1}\times(\IZ/4\IZ)^{d_2}$ for some $d_1\in\IN$ and some $d_2\in\IN^+$. If $d_2\geqslant2$, then by Lemma \ref{simpleLem}(1), $\maol(G)\geqslant\maol((\IZ/4\IZ)^2)=12>3$, a contradiction. Hence $d_2=1$. If $d_1=0$, then $\maol(G)=\maol(\IZ/4\IZ)=2<3$, a contradiction. Hence $d_1\geqslant1$, which implies by Lemma \ref{simpleLem}(1) that $\maol(G)\geqslant\maol(\IZ/2\IZ\times\IZ/4\IZ)=4>3$, another contradiction.
\end{enumerate}
\item Case: $\maol(G_2)=1$ and $\maol(G_3)=3$. By Lemma \ref{simpleLem}(2), $3=\maol(G_3)\geqslant\phi(\Exp(G_3))$, whence $\Exp(G_3)=3$. It follows that $G_3\cong(\IZ/3\IZ)^d$ for some positive integer $d$, and therefore $\maol(G_3)=3^d-1$, which is never equal to $3$, a contradiction.
\end{enumerate}

For statement (2,a): The \enquote{in particular} follows from Burnside's $p^aq^b$-theorem, so we focus on proving the main assertion. Let $\alpha$ be an automorphism of $G$. For each positive integer $k$, consider the subgroup
\[
\C_G(\alpha^k)=\{g\in G\mid g^{\alpha^k}=g\}\leqslant G.
\]
Since $\maol(G)=3$, all cycles of $\alpha$ on $G$ are of one of the lengths $1$, $2$ or $3$. Equivalently, $G=\C_G(\alpha^2)\cup \C_G(\alpha^3)$. But no finite group is a union of two proper subgroups (see e.g.~\cite[Exercise 1.3.9, p.~17]{Rob96a}). It follows that either $\C_G(\alpha^2)=G$ or $\C_G(\alpha^3)=G$, and accordingly, that the order of $\alpha$ divides $2$ or $3$, which concludes the proof of statement (2,a).

For statement (2,b): By statement (2,a), $\Aut(G)$ is solvable. It follows that $G$, being an extension of the solvable group $\Inn(G)$ by the abelian group $\zeta G$, is also solvable. By assumption, all conjugacy classes in $G$ are of one of the lengths $1$, $2$ or $3$, and thus all element centralizers in $G$ are of one of the indices $1$, $2$ or $3$. It follows that the central quotient $G/\zeta G$ is a $\{2,3\}$-group. Since $G$ is solvable, $G$ has a Hall $\{2,3\}'$-subgroup $G_{\{2,3\}'}$, which must be central and thus normal (or, equivalently, unique). Moreover, $G$ has a Hall $\{2,3\}$-subgroup $G_{\{2,3\}}$, which, being centralized by $G_{\{2,3\}'}$, is also normal, and so we have $G=G_{\{2,3\}}\times G_{\{2,3\}'}$. If $G_{\{2,3\}'}$ was nontrivial, it would follow by Lemma \ref{simpleLem}(2) that
\[
3=\maol(G)\geqslant\maol(G_{\{2,3\}'})\geqslant\phi(\Exp(G_{\{2,3\}'}))\geqslant 4,
\]
a contradiction. Hence $G=G_{\{2,3\}}$, which concludes our proof of statement (2,b).
\end{proof}

Note that according to Theorem \ref{mainTheo}(1), the only nonabelian finite group $G$ with $\maol(G)=3$ is $G=\Sym(3)$, for which the set of orders of inner automorphisms is $\{1,2,3\}$. It is precisely this property which we will show next for all nonabelian finite groups $G$ with $\maol(G)=3$:

\begin{lemmma}\label{maol3Lem2}
Let $G$ be a nonabelian finite group with $\maol(G)=3$. Then the set of orders of inner automorphisms of $G$ is $\{1,2,3\}$ (and hence the set of orders of all automorphisms of $G$ is also $\{1,2,3\}$).
\end{lemmma}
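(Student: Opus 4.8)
The plan is to prove the two inclusions separately. That the set of orders of inner automorphisms is contained in $\{1,2,3\}$ is immediate from Lemma \ref{maol3Lem1}(2,a), since $\Inn(G)\leqslant\Aut(G)$. The real content is to show that the orders $2$ and $3$ both actually occur, i.e.\ that the order of $\Inn(G)\cong G/\zeta G$ is divisible by both $2$ and $3$. Since $G$ is a nonabelian $\{2,3\}$-group by Lemma \ref{maol3Lem1}(2,b), the quotient $G/\zeta G$ is a nontrivial $\{2,3\}$-group, so it suffices to rule out that $G/\zeta G$ is a $p$-group for $p\in\{2,3\}$; the orders $2$ and $3$ then follow from Cauchy's theorem. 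If $G/\zeta G$ were a $p$-group, every $p'$-element of $G$ would be central, so the Hall $p'$-subgroup $A$ of $G$ would be central and $G=A\times P$ with $P$ the (nonabelian) Sylow $p$-subgroup. A short computation using Lemma \ref{simpleLem}(3), Proposition \ref{maol1Prop} and the fact that a nonabelian $p$-group has a noncentral element (whose conjugacy class has size at least $p$) then forces $A$ to be trivial or $\IZ/2\IZ$ and $\maol(P)=3$. Thus everything reduces to the key claim: \emph{no nonabelian $p$-group $P$ with $p\in\{2,3\}$ satisfies $\maol(P)=3$.}

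To prove this claim I would first extract structure. Applying Lemma \ref{maol3Lem1}(2,a) to $P$ itself, all automorphisms of $P$ have order in $\{1,2,3\}$; in particular the inner automorphism orders lie in $\{1,p\}$, so $V:=P/\zeta P$ has exponent $p$, and every conjugacy class of $P$, being a suborbit of an $\Aut(P)$-orbit, has size at most $3$, hence (being a $p$-power) at most $p$. By the classical fact that a finite $p$-group all of whose conjugacy classes have size at most $p$ has $|P'|\leqslant p$, and since $P$ is nonabelian, we get $|P'|=p$ and $P$ of nilpotency class $2$. Next I rule out a noncyclic centre: if $\zeta P$ were noncyclic, its socle would be $\cong\IF_p^r$ with $r\geqslant2$, and the central automorphisms $\alpha_f$ arising from homomorphisms $f\colon P\twoheadrightarrow V\to\IF_p^r\hookrightarrow\zeta P$ (which are automorphisms because $f$ kills $\zeta P$, so $1_P$ is the only element of $\zeta P$ inverted by $f$) would move a fixed noncentral $g$ to all $p^r\geqslant p^2\geqslant4$ elements $g\cdot f(g)$, contradicting $\maol(P)=3$. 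Hence $\zeta P$ is cyclic, $P'\leqslant\zeta P$ is cyclic of order $p$, $V$ is elementary abelian, and the conjugacy class of a noncentral $g$ is exactly $gP'$, of size $p$, lying inside the single coset $g\zeta P$.

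The decisive point is now the cyclic-centre case, where the central automorphisms alone only produce orbits of size $\geqslant p$, which for $p=3$ is not yet a contradiction. Here I would argue by finding an automorphism that moves cosets of $\zeta P$. Since $\maol(P)=3$ there is an orbit of size $3$, so $3\mid|\Aut(P)|$ and $\Aut(P)$ has an element $\beta$ of order $3$; moreover, a short computation using that $\Aut(\zeta P)$ is a $p$-group (as $\zeta P$ is a cyclic $p$-group) shows that no central automorphism can have order coprime to $p$. For $p=2$ this means $\beta$ acts on $V$ as an order-$3$ element of $\GL(V)$, which cyclically permutes the three nonzero vectors of some $2$-dimensional block; for a corresponding $g$ the orbit then contains $g,g^{\beta},g^{\beta^2}$ in three distinct cosets together with the conjugate $gc$ (where $c$ generates $P'$), giving at least $4$ points — a contradiction. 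For $p=3$ I would instead use a dichotomy: either some automorphism $\tau$ acts nontrivially on $V$, in which case a noncentral $g$ with $g^{\tau}$ in a different coset has orbit $\supseteq gP'\cup\{g^{\tau}\}$ of size $\geqslant p+1=4$; or every automorphism of $P$ is central, in which case the orbit of each noncentral $g$ under the central automorphisms coincides with its conjugacy class $gP'$, forcing every such $f$ to have image in $P'$ and hence $\Aut(P)=\Aut_{\cent}(P)$ to be abelian. But then $P$ is a Miller group with $|P'|=3>2$ and $P'$ cyclic, contradicting Proposition \ref{millerGroupProp}(3). In all cases we reach a contradiction, proving the claim and hence the lemma.

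I expect the main obstacle to be exactly this cyclic-centre case, and in particular making the $p=3$ dichotomy airtight: unlike for $p=2$, the existence of an order-$3$ automorphism is automatic from $\Inn(P)$ but yields no new cosets (inner automorphisms act trivially on $V$ in class $2$), so one must genuinely separate the case where \emph{some} automorphism moves a coset from the case where \emph{all} automorphisms are central, and only the latter can be closed off through the Miller-group machinery of Proposition \ref{millerGroupProp}.
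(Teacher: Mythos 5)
Your proof is correct, and in the hard case it takes a genuinely different (and shorter) route than the paper. The reduction is essentially the paper's: both arguments boil the problem down to showing that no nonabelian $p$-group $P$ with $p\in\{2,3\}$ satisfies $\maol(P)=3$ (the paper phrases this via $\Exp(\Inn(G))\in\{2,3\}$, nilpotency of $G$, and Lemma~\ref{simpleLem}(3)). For $p=2$ the paper's argument is slicker than yours: non-central classes of length $2$ are full $\Aut(P)$-orbits, so every automorphism squares to the identity, contradicting the order-$3$ automorphism supplied by orbit-stabilizer and Cauchy; your version instead builds structure first ($|P'|=p$ via the classical breadth-one fact of Knoche, class $2$, cyclic centre -- all sound) and then derives an orbit of size $\geqslant 4$ from a free $2$-dimensional block of an order-$3$ automorphism acting on $P/\zeta P$, which is correct but heavier. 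For $p=3$, where the paper invokes Winter's theorem, a GAP check on the extraspecial groups of order $27$, and the standard-tuples counting inequality $3^d\geqslant\prod_{j=1}^d{(3^j-1)}$, your dichotomy is a genuine simplification: a non-central automorphism $\tau$ immediately yields an orbit containing $gP'\cup\{g^{\tau}\}$ of size $\geqslant4$, while $\Aut(P)=\Aut_{\cent}(P)$ forces every relevant $f$ to map into $P'$ (orbits of non-central elements equal their classes $gP'$, and non-central elements generate $P$), whence any two such central automorphisms commute, $\Aut(P)$ is abelian, and $P$ is a Miller $3$-group with $P'$ cyclic of order $3>2$, contradicting Proposition~\ref{millerGroupProp}(3). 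This bypasses both the computer verification and the counting argument entirely; what the paper's heavier route buys is that the standard-tuples technique is reused for Theorem~\ref{mainTheo}(3) in Section~\ref{sec4}. One blemish you should repair in writing this up: the parenthetical justification \enquote{$\Aut(\zeta P)$ is a $p$-group (as $\zeta P$ is a cyclic $p$-group)} is false for $p=3$, since $\Aut(\IZ/3^n\IZ)\cong\IZ/2\cdot3^{n-1}\IZ$ has even order; fortunately you only apply the resulting claim (central automorphisms have $p$-power order) in the $p=2$ branch, where it is true -- and note that even there it genuinely requires the cyclicity of $\zeta P$ established beforehand, since for instance $\Aut((\IZ/2\IZ)^2)$ has order $6$.
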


\begin{proof}
The \enquote{and hence} follows from Lemma \ref{maol3Lem1}(2,a), so we focus on proving the main assertion. Note that by Lemma \ref{maol3Lem1}(2,a), the set of orders of inner automorphisms of $G$ is contained in $\{1,2,3\}$, whence it suffices to show that $\Exp(\Inn(G))$ can neither be $2$ nor $3$. Assume otherwise. Then $\Inn(G)=G/\zeta G$ is of prime-power order, and thus $G$ is nilpotent. Hence, in view of Lemma \ref{maol3Lem1}(2,b), we have $G=G_2\times G_3$, where $G_p$ denotes the Sylow $p$-subgroup of $G$ for $p\in\{2,3\}$. By Lemma \ref{simpleLem}(3), it follows that $3=\maol(G)=\maol(G_2)\cdot\maol(G_3)$. Distinguish two cases:
\begin{enumerate}
\item Case: $\maol(G_2)=3$ and $\maol(G_3)=1$. Then by Proposition \ref{maol1Prop}, $G_3$ is trivial, and so $G$ is a nonabelian $2$-group with $\maol(G)=3$. All non-central conjugacy classes in $G$ are of length $2$ and are $\Aut(G)$-orbits (otherwise, there would be an $\Aut(G)$-orbit of length at least $2\cdot 2=4>3$). It follows that for any $\alpha\in\Aut(G)$, the subgroup
\[
\C_G(\alpha^2)=\{g\in G\mid g^{\alpha^2}=g\}
\]
contains the generating set $G\setminus\zeta G$, and thus $\C_G(\alpha^2)=G$. Therefore, $\alpha^2=\id_G$, whence $\Exp(\Aut(G))=2$. However, we are assuming that $\maol(G)=3$, and so by the orbit-stabilizer theorem, $3\mid|\Aut(G)|$, so that $\Aut(G)$ contains an order $3$ element by Cauchy's theorem, a contradiction.
\item Case: $\maol(G_2)=1$ and $\maol(G_3)=3$. Then by Proposition \ref{maol1Prop}, $G_2$ is abelian, whence $G_3$ is a nonabelian $3$-group with $\maol(G_3)=3$. Observe that all non-central conjugacy classes in $G_3$ are of length $3$ and are $\Aut(G_3)$-orbits. Since the Sylow $3$-subgroup of $\Sym(3)$ is abelian, it follows that any two inner automorphisms of $G_3$ commute on $G_3\setminus\zeta G_3$, which is a generating subset for $G_3$, so that $\Inn(G_3)$ is abelian, i.e., $G_3$ is nilpotent of class $2$. We now list some more structural properties of $G_3$, in the spirit of Lemma \ref{maol2Lem}(2):
\begin{itemize}
\item $\zeta G_3$ is cyclic. Indeed, otherwise, a suitable non-central element of $G_3$ would have at least $|\Hom(\IZ/3\IZ,(\IZ/3\IZ)^2)|=9$ distinct images under central automorphisms of $G_3$, a contradiction.
\item $|\zeta G_3|>3$. Indeed, otherwise, $G_3$ would be an extraspecial $3$-group. If $|G_3|=3^{1+2}$, then one can check with GAP \cite{GAP4} that $\maol(G)\in\{18,24\}$, a contradiction. And if $|G_3|=3^{1+2n}$ with $n\geqslant2$, then by \cite[Theorem 1]{Win72a} (and the fact that the symplectic group $\Sp_{2n}(q)\leqslant\GL_{2n}(q)$ acts transitively on $\IF_q^{2n}\setminus\{0\}$, which can be derived from Witt's theorem), $\Aut(G)$ has an orbit of length at least $3^{2n-2}-1\geqslant 8>3$ on $G$, a contradiction.
\item $G_3/\zeta G_3$ is an elementary abelian $3$-group. Indeed, otherwise, a suitable non-central element of $G_3$ would have at least $|\End(\IZ/9\IZ)|=9$ distinct images under central automorphisms of $G_3$, a contradiction.
\item $G_3'\cong\IZ/3\IZ$. Since $G_3$ is nilpotent of class $2$, $G_3'\leqslant\zeta G_3$, whence $G_3'$ is cylic. Moreover, since $G_3$ is nilpotent of class $2$ (which implies that $[x^e,y^f]=[x,y]^{ef}$ for all $x,y\in G_3$ and all $e,f\in\IZ$) and $\Exp(G_3/\zeta G_3)=3$, the exponent of $G_3'$ must be $3$ as well.
\item $G_3/G_3'$ is an elementary abelian $3$-group. Indeed, otherwise, a suitable element of $G_3$ outside $G_3'$ would have at least $|\End(\IZ/9\IZ)|=9$ distinct images under central automorphisms of $G_3$ (note that any homomorphism $f:G_3\rightarrow\zeta G_3$ has the property that $1_{G_3}$ is the only element of $\zeta G_3$ mapped to its inverse by $f$, since $\ker(f)$ contains $G_3'$, which is a nontrivial subgroup of the cyclic $3$-group $\zeta G_3$), a contradiction.
\end{itemize}
We can now repeat the \enquote{standard tuples} argument from the proof of Proposition \ref{maol2Prop} almost verbatim (only needing to replace the prime $2$ by $3$) and find that with $G_3/G_3'\cong(\IZ/3\IZ)^d$, we necessarily have
\[
3^d\geqslant\prod_{j=1}^d{(3^j-1)},
\]
which implies that $d=1$ and thus $|G_3|=3^2$, contradicting that $G_3$ is nonabelian.
\end{enumerate}
\end{proof}

We are now ready to prove the main result of this subsection:

\begin{propposition}\label{maol3Prop}
Let $G$ be a finite group. The following are equivalent:
\begin{enumerate}
\item $\maol(G)=3$.
\item $G$ is isomorphic to $(\IZ/2\IZ)^2$ or to $\Sym(3)$.
\end{enumerate}
\end{propposition}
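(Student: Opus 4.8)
The plan is to prove the equivalence by establishing the easy direction first and then leveraging the two preceding lemmas to settle the hard direction. For ``(2) $\Rightarrow$ (1)'', I would simply verify directly that $\maol((\IZ/2\IZ)^2)=3$ (which already appears implicitly throughout the section, since $\Aut((\IZ/2\IZ)^2)\cong\GL_2(\IF_2)\cong\Sym(3)$ acts transitively on the three nonzero vectors) and that $\maol(\Sym(3))=3$ (the automorphism group of $\Sym(3)$ is $\Inn(\Sym(3))\cong\Sym(3)$ itself, and its orbits on the six elements are $\{1\}$, the three transpositions, and the two $3$-cycles, of lengths $1$, $3$, $2$).

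For the substantive direction ``(1) $\Rightarrow$ (2)'', the strategy is to dispose of the abelian case immediately via Lemma \ref{maol3Lem1}(1), which gives $G\cong(\IZ/2\IZ)^2$, and then to concentrate all the work on showing that a nonabelian finite group $G$ with $\maol(G)=3$ must be isomorphic to $\Sym(3)$. So I would assume $G$ is nonabelian and bring to bear Lemma \ref{maol3Lem2}: the set of orders of inner automorphisms of $G$ is exactly $\{1,2,3\}$. In particular $\Inn(G)=G/\zeta G$ is a nontrivial group whose element orders are among $\{2,3\}$ together with $1$, and crucially it contains elements of \emph{both} orders $2$ and $3$, so $G/\zeta G$ is not a $p$-group for any single prime $p$. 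I would combine this with Lemma \ref{maol3Lem1}(2,b) (that $G$ is a $\{2,3\}$-group) to pin down the structure of $G/\zeta G$.

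The key step is to show $G/\zeta G\cong\Sym(3)$ and then that $\zeta G$ is trivial. Since $G/\zeta G$ has order divisible by both $2$ and $3$ but its elements have order at most $3$, and since a $\{2,3\}$-group all of whose elements have order dividing $6$ and which is nonnilpotent is forced to be small, I would argue that the non-central conjugacy classes of $G$ have length $2$ or $3$ (lengths being indices of centralizers, hence dividing $6$, but capped by $\maol(G)=3$), and that each such class is a full $\Aut(G)$-orbit (otherwise two classes of length at least $2$ would merge into an orbit of length $\geqslant4$). A counting argument on the class equation, or a direct appeal to the classification of groups with all noncentral classes of length $\leqslant 3$, then forces $|G/\zeta G|=6$, i.e.\ $G/\zeta G\cong\Sym(3)$. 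Finally I would show $\zeta G=\{1_G\}$: if $\zeta G$ were nontrivial, a prime $p\in\{2,3\}$ dividing $|\zeta G|$ would let me build central automorphisms (as in the proofs of Lemma \ref{maol2Lem}(2,b) and Lemma \ref{maol3Lem2}) producing an orbit of length $\geqslant4$, or else force the Sylow structure to contradict $\maol(G)=3$ via Lemma \ref{simpleLem}. With $\zeta G$ trivial, $G\cong\Inn(G)=G/\zeta G\cong\Sym(3)$.

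The main obstacle I expect is cleanly ruling out a nontrivial center: one must ensure that no extension of $\Sym(3)$ by a nontrivial central $\{2,3\}$-group survives the constraint $\maol(G)=3$, and the central-automorphism machinery has to be invoked carefully because $\Sym(3)$ has trivial center yet $G/\zeta G\cong\Sym(3)$ permits various central extensions \emph{a priori}. I anticipate that the cleanest route is to observe that any nontrivial $\zeta G$ forces either an abelian direct factor (handled by Lemma \ref{simpleLem}(1) forcing $\maol(G)\geqslant4$) or a genuinely non-split central extension whose automorphisms again produce a long orbit, and to check the finitely many small possibilities against $\maol(G)=3$ directly.
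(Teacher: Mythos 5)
Your skeleton agrees with the paper's up through the use of Lemma \ref{maol3Lem1}(1) for the abelian case and Lemma \ref{maol3Lem2} for the nonabelian case, but you then diverge: the paper does \emph{not} run a class-equation count, it quotes the Brandl--Shi classification \cite{BS91a} of groups whose element orders are exactly $\{1,2,3\}$, obtaining that $\Inn(G)$ is $\Dih((\IZ/3\IZ)^d)$ or $(\IZ/2\IZ)^{2d}\rtimes\IZ/3\IZ$, and then uses $\mccl(\Inn(G))\leqslant\mccl(G)\leqslant 3$ to force $\Dih(\IZ/3\IZ)\cong\Sym(3)$. Your alternative middle step is salvageable, and in fact can be made fully elementary: in $H:=G/\zeta G$, which has element orders exactly $\{1,2,3\}$, the centralizer of an involution $t$ contains no element of order $3$ (else an order-$6$ element arises), so $\C_H(t)$ is a $2$-group of index at most $3$, giving $|H|_3\leqslant 3$; symmetrically $\C_H(s)$ for $s$ of order $3$ is a $3$-group of index at most $3$, giving $|H|_2\leqslant 2$; hence $|H|=6$ and $H\cong\Sym(3)$ since $H$ has no element of order $6$. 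Be warned, however, that your parenthetical justification --- ``a $\{2,3\}$-group all of whose elements have order dividing $6$ and which is nonnilpotent is forced to be small'' --- is false as stated: the Frobenius groups $(\IZ/3\IZ)^d\rtimes\IZ/2\IZ$ and $(\IZ/2\IZ)^{2d}\rtimes\IZ/3\IZ$ are arbitrarily large of exponent $6$; it is the class-length cap, not the exponent, that does the work.

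The genuine gap is your final step, eliminating $\zeta G$, and you half-acknowledge it yourself. First, ``check the finitely many small possibilities directly'' is not available: at that stage $|\zeta G|=|G|/6$ is a priori unbounded, so there are infinitely many candidate extensions. Second, your central-automorphism route does not work uniformly in $p$: a central automorphism with values in the $3$-part of $\zeta G$ requires a nontrivial homomorphism $G\rightarrow\zeta G$ with image of order $3$, hence $3$-torsion in $G/G'$, which is not guaranteed (and since the $2$-part of $\zeta G$ may be just $\IZ/2\IZ$, the count of four homomorphisms from Lemma \ref{maol2Lem}(2,b) is likewise unavailable, so a bare central automorphism only yields an orbit of length $\geqslant 2$, not $\geqslant 4$). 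The paper closes this in two moves that your sketch does not supply. (i) If $2\mid|\zeta G|$, it builds the central automorphism $\alpha_f$ from $G\twoheadrightarrow G/\zeta G\cong\Sym(3)\twoheadrightarrow\IZ/2\IZ\hookrightarrow\zeta G$ and \emph{combines} it with the fact that a suitable $g$ has $g^G$ meeting three distinct cosets of $\zeta G$, so the $\Aut(G)$-orbit of $g$ has length at least $6$; note this handles nonsplit extensions such as the dicyclic group of order $12$, which your ``abelian direct factor or nonsplit extension'' dichotomy leaves untreated on the nonsplit side. (ii) With $\zeta G$ then a $3$-group, it proves by an explicit computation (writing the abelian Sylow $3$-subgroup $G_3$ additively, taking $g\in G_3\setminus\zeta G$ with $g^h=-g+z$ for the complementing involution $h$, and adjusting $g$ so that $\ord(g)=3$ and $g^h=-g$) that the extension is forced to split as $G\cong\zeta G\times\Sym(3)$, whereupon Lemma \ref{simpleLem}(1),(2) gives $\maol(G)\geqslant\phi(3)\cdot\maol(\Sym(3))=6>3$. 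Without a substitute for this splitting argument (an alternative would be the cohomological fact that $H^2(\Sym(3),A)$ vanishes for a trivial $\Sym(3)$-module $A$ that is an abelian $3$-group), your proof of ``$\zeta G$ trivial'' does not go through.
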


\begin{proof}
The implication \enquote{(2) $\Rightarrow$ (1)} is easy, so we focus on proving \enquote{(1) $\Rightarrow$ (2)}. So, assume that $G$ is a finite group with $\maol(G)=3$. If $G$ is abelian, then $G\cong(\IZ/2\IZ)^2$ by Lemma \ref{maol3Lem1}(1). We may thus assume that $G$ is nonabelian and need to show that $G\cong\Sym(3)$.

By Lemma \ref{maol3Lem2}, the set of element orders of $\Inn(G)$ is $\{1,2,3\}$, and hence by \cite[Theorem]{BS91a}, $\Inn(G)\cong G/\zeta G$ is one of the following:
\begin{itemize}
\item the Frobenius group $(\IZ/3\IZ)^d\rtimes\IZ/2\IZ=\Dih((\IZ/3\IZ)^d)$ for some $d\in\IN^+$, or
\item the Frobenius group $(\IZ/2\IZ)^{2d}\rtimes\IZ/3\IZ$ for some $d\in\IN^+$.
\end{itemize}
Note that the maximum conjugacy class length in $\Inn(G)$ cannot exceed the maximum conjugacy class length in $G$, which is at most $3$. But in the Frobenius group $(\IZ/2\IZ)^{2d}\rtimes\IZ/3\IZ$, the length of the conjugacy class of any generator of the Frobenius complement $\IZ/3\IZ$ is $2^{2d}\geqslant 4>3$. Hence $\Inn(G)\cong\Dih((\IZ/3\IZ)^d)$, which has a conjugacy class of length $3^d$, so that $d=1$ and thus
\[
G/\zeta G\cong \Inn(G)\cong\Dih(\IZ/3\IZ)\cong\Sym(3).
\]
Next, we claim that $\zeta G$ is a $3$-group. Indeed, otherwise, in view of Lemma \ref{maol3Lem1}(2,b), there is an embedding $\IZ/2\IZ\overset{\iota}\hookrightarrow\zeta G$. Moreover, we have a finite sequence of group homomorphisms
\[
G\overset{\textrm{can.}}\twoheadrightarrow G/\zeta G\overset{\sim}\rightarrow\Sym(3)\overset{\pi}\twoheadrightarrow\IZ/2\IZ,
\]
and through composition, we obtain a nontrivial homomorphism $G\overset{f}\rightarrow\zeta G$ with nontrivial associated central automorphism $\alpha_f$. Now, let
\[
g \in G \setminus \ker\left(G\overset{\textrm{can.}}\twoheadrightarrow G/\zeta G\overset{\sim}\rightarrow\Sym(3)\overset{\pi}\twoheadrightarrow\IZ/2\IZ\right).
\]
Then the conjugacy class length of the image of $g$ in $G/\zeta G\cong\Sym(3)$ is $3$, whence $g^G$ meets three distinct cosets of $\zeta G$ in $G$. Moreover, $g^{\alpha_f}$ is an element in the same central coset as $g$, but distinct from $g$ itself. It follows that $|g^G|\geqslant 2\cdot 3=6>3$, a contradiction. This concludes our argument that $\zeta G$ is a $3$-group.

It now follows that $G$ has a normal, abelian Sylow $3$-subgroup $G_3$, and $G=G_3\rtimes\IZ/2\IZ$, where the generator $h$ of $\IZ/2\IZ$ centralizes the index $3$ subgroup $\zeta G$ of $G_3$. Let $g\in G_3\setminus\zeta G$. Then, writing $G_3$ additively, we have $g^h=-g+z$ for some $z\in\zeta G$. It follows that
\[
3g=(3g)^h=3g^h=3(-g+z)=-3g+3z,
\]
and thus $3(2g-z)=0$. Through replacing $g$ by $2g-z$, we may assume w.l.o.g.~that $\ord(g)=3$. Recall that $g^h=-g+z$ for some $z\in\zeta G$, and note that $\ord(z)\mid 3$ (otherwise, $g^h$ would have order larger than $3=\ord(g)$, a contradiction). Set $g':=g+z$. Then
\[
(g')^h=g^h+z^h=-g+z+z=-g-z=-g'.
\]
Hence, through replacing $g$ by $g'$, we may assume w.l.o.g.~that $g^h=g^{-1}$. This entails that
\[
G=G_3\rtimes\IZ/2\IZ=(\zeta G\times\langle g\rangle)\rtimes\IZ/2\IZ=\zeta G\times(\langle g\rangle\rtimes\IZ/2\IZ) \cong \zeta G\times\Sym(3).
\]
Therefore, if $\zeta G$ is nontrivial, then by Lemma \ref{simpleLem}(2),
\[
\maol(\zeta G)\geqslant\phi(\Exp(\zeta G))\geqslant\phi(3)=2,
\]
and thus, by Lemma \ref{simpleLem}(1),
\[
3=\maol(G)\geqslant\maol(\zeta G)\cdot\maol(\Sym(3))\geqslant 2\cdot 3=6,
\]
a contradiction. Hence $\zeta G$ is trivial, and so $G\cong\Sym(3)$, as we needed to show.
\end{proof}

\subsection{Proof of Theorem \ref{mainTheo}(1)}\label{subsec2P4}

This is immediate from Propositions \ref{maol1Prop}, \ref{maol2Prop} and \ref{maol3Prop}.

\section{Finite groups \texorpdfstring{$G$}{G} with \texorpdfstring{$\maol(G)=8$}{maol(G)=8}}\label{sec3}

This section is concerned with the proof of Theorem \ref{mainTheo}(2). We begin by introducing a certain infinite sequence of finite $2$-groups:

\begin{definition}\label{2GroupDef}
For $n\in\IN^+$, let $G_n$ be the finite $2$-group given by the power-commutator presentation
\begin{align*}
\langle x_1,\ldots,x_{2^n+1},a,b \mid &[a,b]=[x_i,a]=[x_i,b]=1,[x_{2i-1},x_{2i}]=a,[x_{2i},x_{2i+1}]=b, \\
&[x_i,x_j]=1\text{ if }|i-j|>1,x_1^2=x_{2^n+1}^2=b, \\
&a^2=b^2=x_i^2=1\text{ if }1<i<2^n+1\rangle.
\end{align*}
\end{definition}

\begin{remark}\label{2GroupRem}
We note the following concerning Definition \ref{2GroupDef}:
\begin{enumerate}
\item As is easy to check, $G_n$ is a finite $2$-group of order $2^{2^n+3}$, of nilpotency class $2$ and of exponent $4$. Moreover, $C_n:=\zeta G_n=\langle a,b\rangle\cong(\IZ/2\IZ)^2$ and $Q_n:=G_n/C_n\cong(\IZ/2\IZ)^{2^n+1}$.
\item The specified power-commutator presentation of $G_n$ is inspired by the presentation of the infinite $2$-group
\begin{align*}
\langle a,b,x_1,x_2,\ldots \mid &[a,b]=[x_i,a]=[x_i,b]=1,[x_{2i-1},x_{2i}]=a,[x_{2i},x_{2i+1}]=b, \\
&[x_i,x_j]=1\text{ if }|i-j|>1,x_1^2=b,a^2=b^2=x_i^2=1\text{ if }i>1\rangle,
\end{align*}
which was given by Robinson and Wiegold as an example of a group with infinite automorphism group but largest automorphism orbit length $4$, see \cite[Section 4, Construction before Proposition 3, Proposition 3 itself and Remark after the proof of Proposition 3]{RW84a}.
\item The assignment $a\mapsto a$, $b\mapsto b$, $x_i\mapsto x_i$ for $i\not=2^n$ and $x_{2^n}\mapsto x_{2^n}x_{2^n+1}$ extends to a non-central automorphism $\alpha_n$ of $G_n$.
\end{enumerate}
\end{remark}

We prove Theorem \ref{mainTheo}(2) by proving the following proposition:

\begin{proposition}\label{2GroupProp}
For all $n\in\IN^+$, we have $|\Aut(G_n):\Aut_{\cent}(G_n)|=2$. In particular, $\maol(G_n)=8$.
\end{proposition}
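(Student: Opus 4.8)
The plan is to pass to the induced action on the elementary abelian quotient $Q_n = G_n/C_n$, where $C_n = \zeta G_n$ (Remark \ref{2GroupRem}(1)). Since $C_n$ is characteristic, every $\alpha \in \Aut(G_n)$ induces $\bar\alpha \in \GL(Q_n)$, and by the discussion in Subsection \ref{subsecPrep3} one checks that $\Aut_{\cent}(G_n)$ is exactly the kernel of $\alpha \mapsto \bar\alpha$: indeed $\Hom(G_n,C_n)=\Hom(Q_n,C_n)$ (as $C_n=G_n'=\zeta G_n$ is abelian), every such homomorphism yields a central automorphism (the condition on $\zeta G_n$ is automatic, since $C_n$ is elementary abelian and contained in every such kernel), and $\alpha$ fixes each $C_n$-coset setwise iff $\bar\alpha=\mathrm{id}$. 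Hence $\Aut(G_n)/\Aut_{\cent}(G_n)$ is isomorphic to the image of $\Aut(G_n)$ in $\GL(Q_n)$, and it suffices to show this image has order exactly $2$. The lower bound is immediate from Remark \ref{2GroupRem}(3): $\alpha_n$ fixes $a,b$ but moves $\bar x_{2^n}$, so $\bar\alpha_n\neq\mathrm{id}$.

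For the upper bound I would record two pieces of $\Aut(G_n)$-invariant data on $Q_n$: the \emph{commutator form} $\beta\colon Q_n\times Q_n\to C_n$, $(\bar g,\bar h)\mapsto[g,h]$, and the \emph{squaring map} $\sigma\colon Q_n\to C_n$, $\bar g\mapsto g^2$ (both well defined since $G_n$ has class $2$ and exponent $4$, with $\sigma$ a $C_n$-valued quadratic map whose polar form is $\beta$). Any $\alpha$ induces a pair $(M,N)=(\bar\alpha,\alpha|_{C_n})\in\GL(Q_n)\times\GL(C_n)$ with $\beta(Mu,Mv)=N\beta(u,v)$ and $\sigma(Mv)=N\sigma(v)$, and $N$ is determined by $M$ (e.g. $Na=\beta(M\bar x_1,M\bar x_2)$). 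Writing $\beta=\beta_a\,a+\beta_b\,b$ in the basis $a,b$ of $C_n$, the defining relations say that $\beta_a,\beta_b$ are the ``edge forms'' of the path $1-2-\cdots-(2^n+1)$, the edge $\{i,i+1\}$ contributing to $\beta_a$ for $i$ odd and to $\beta_b$ for $i$ even, while $\sigma$ vanishes on the basis except $\sigma(\bar x_1)=\sigma(\bar x_{2^n+1})=b$.

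I would first pin down $N$. The three nonzero scalar quadratic forms $\lambda\circ\sigma$ ($\lambda\in C_n^\ast$) have polar forms $\beta_a$, $\beta_b$, $\beta_a+\beta_b$, each with a one-dimensional radical, namely the lines $\langle\bar x_{2^n+1}\rangle$, $\langle\bar x_1\rangle$ and $\langle\bar x_1+\bar x_3+\cdots+\bar x_{2^n+1}\rangle$. A direct computation shows that $\sigma$ takes the $C_n$-values $b$, $b$ and $0$ on these three lines, and that exactly one of the three forms, namely $b^\ast\circ\sigma$, is nonzero on its radical. Since $M$ carries these scalar forms to one another up to $N$, preservation of the (linear-equivalence) property ``nonzero on the radical'' forces $Na=a$; and since $M$ permutes the three radical lines while $N$ fixes $0$, the unique value-$0$ line must be preserved, so the two value-$b$ lines are permuted among themselves and $Nb=b$. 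Hence $N=\mathrm{id}$, and $M$ is an honest isometry of $\sigma$ fixing $\bar x_1$ and $\bar x_{2^n+1}$ (the unique nonzero vectors in the radicals of $\beta_b$ and $\beta_a$).

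It then remains to show the isometry group of $(\beta,\sigma)$ has order $2$, which is \textbf{the main obstacle}. Here I would propagate along the chain: knowing $M\bar x_1=\bar x_1$, the relations $\beta(\bar x_1,M\bar x_2)=a$, the vanishing of $\beta(\bar x_j,M\bar x_i)$ for non-adjacent indices, and $\sigma(M\bar x_i)=\sigma(\bar x_i)$ successively constrain the images $M\bar x_2,M\bar x_3,\dots$, the alternation of the independent labels $a,b$ making each step essentially rigid, until the only surviving degree of freedom is a single binary choice at the last interior vertex, realized by $\bar\alpha_n$; it is cleanest to phrase this in the ``standard tuple'' language of Proposition \ref{maol2Prop}, solving the defining relations of $G_n$ coordinate by coordinate. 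Granting $|\Aut(G_n):\Aut_{\cent}(G_n)|=2$, the value $\maol(G_n)=8$ follows quickly: as $\Aut_{\cent}(G_n)$ realizes all of $\Hom(Q_n,C_n)$, evaluation at a nonzero $\bar g$ is onto $C_n$, so each noncentral $C_n$-coset is one $\Aut_{\cent}(G_n)$-orbit of length $4$; writing $\Aut(G_n)=\Aut_{\cent}(G_n)\sqcup\alpha_n\Aut_{\cent}(G_n)$, every $\Aut(G_n)$-orbit lies in the union of the two cosets $gC_n$ and $g^{\alpha_n}C_n$, hence has length at most $8$, with equality whenever $\bar\alpha_n$ moves $\bar g$, e.g. for $g=x_{2^n}$.
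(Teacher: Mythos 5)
Your reduction framework is sound, and its front end is genuinely slicker than the paper's: identifying $\Aut_{\cent}(G_n)$ with the kernel of $\Aut(G_n)\to\GL(Q_n)$ is correct (every $f\in\Hom(Q_n,C_n)$ does give a central automorphism since $C_n=G_n'=\zeta G_n\leqslant\ker f$), and your determination of $N=\alpha|_{C_n}=\id$ via the invariant \enquote{$\lambda\circ\sigma$ is nonzero on the radical of its polar form} (which singles out $\lambda=b^*$, since the radicals of $\beta_a$, $\beta_b$, $\beta_a+\beta_b$ are $\langle\bar x_{2^n+1}\rangle$, $\langle\bar x_1\rangle$, $\langle\bar x_1+\bar x_3+\cdots+\bar x_{2^n+1}\rangle$ with $\sigma$-values $b$, $b$, $0$) checks out; it repackages in quadratic-form language what the paper establishes in its Claims 1 and 2, where the same facts appear as the characteristicity of $\langle x_1,C_n\rangle$ and $\langle x_{2^n+1},C_n\rangle$ and the identities $\langle a\rangle=[\langle x_1,C_n\rangle,G_n]$, $\langle b\rangle=\mho^1(\langle x_1,C_n\rangle)$. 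Your closing derivation of $\maol(G_n)=8$ from the index-$2$ statement also matches the paper's and is fine.

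However, the step you yourself flag as \textbf{the main obstacle} is a genuine gap, and it is exactly where the paper spends the bulk of its proof (Claims 3--8). Your sketch --- propagate along the chain, \enquote{the alternation of the labels $a,b$ making each step essentially rigid}, leaving one binary choice at the last interior vertex --- misrepresents the constraint structure. Even granting $M\bar x_1=\bar x_1$ and $M\bar x_{2^n+1}=\bar x_{2^n+1}$, preservation of $\beta$ does not locally determine $M\bar x_2$, nor does it force $M$ to fix the interior odd generators; the paper must first prove that the flag of subgroups $\langle C_n,x_1,x_3,\ldots,x_{2m+1}\rangle$ is characteristic (Claim 3, itself an induction through the centralizers $H_m$ with a case analysis parallel to Claim 1), and then pin down the even rows (Claims 4--6). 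Even after all of that, an isometry may still send $\bar x_{2i}\mapsto\bar x_{2i}+\sum_j c_{i,2j+1}\bar x_{2j+1}$, where $\beta$-preservation imposes only the symmetric relations $c_{k,2i\pm1}=c_{i,2k\pm1}$ (Claim 7(2)) and $\sigma$-preservation only $c_{i,2i-1}=0$ and $c_{i,2i+1}=c_{i,1}+c_{i,2^n+1}$ --- a large linear system with many a priori solutions, so there is no step-by-step rigidity. Killing every coefficient except the single one at $(e,o)=(2^n,2^n+1)$ requires the global induction of the paper's Claim 8, which shows $\alpha_{e,o}=0$ whenever $e+o\equiv 1+2^{k-1}\Mod{2^k}$ for $k=2,\ldots,n+1$; this is where the specific chain length $2^n+1$ enters, and nothing in your sketch accounts for it (indeed, for chain lengths not of the form $2^n+1$ the surviving symmetry group can be larger). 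As it stands, your proposal proves the lower bound $|\Aut(G_n):\Aut_{\cent}(G_n)|\geqslant2$ and correctly reduces the upper bound to showing that the isometry group of $(\beta,\sigma)$ has order $2$, but the core of that computation is asserted rather than proved.
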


\begin{proof}
The \enquote{In particular} follows from the observations that $\Aut_{\cent}(G_n)$ acts transitively on each nontrivial coset of $C_n$ and that $|C_n|=4$ (see Remark \ref{2GroupRem}(1)). As for the main assertion, we will show that every automorphism of $G_n$ lies in the coset union $\Aut_{\cent}(G_n) \cup \Aut_{\cent}(G_n)\alpha_n$, with $\alpha_n$ as in Remark \ref{2GroupRem}(3). We do so in several steps, in each of which we first make a claim (surrounded by a frame), which is subsequently proved.

\begin{mdframed}
Claim 1: Let $g\in G_n$, and assume that $\ord(g)=4$ and $|G_n:\C_{G_n}(g)|=2$. Then:
\begin{enumerate}
\item If there is no $h\in G_n$ with $g^2=[g,h]$, then $g\in x_1C_n$.
\item If there is an $h\in G_n$ with $g^2=[g,h]$, then $g\in x_{2^n+1}C_n$.
\end{enumerate}
In particular, the two subgroups $\langle x_1,C_n\rangle$ and $\langle x_{2^n+1},C_n\rangle$ are characteristic in $G_n$.
\end{mdframed}

Write $g\equiv x_1^{u_1}\cdots x_k^{u_k}\Mod{C_n}$ where $u_i\in\IZ/2\IZ$ and $u_k\not=0$. If $k=1$, then both asserted implications are true; the former because its necessary condition is true, and the latter because its sufficient condition is false ($g^2=x_1^2=b$, but $[g,G_n]=[x_1,G_n]=\{1,a\}$). So assume henceforth that $k>1$. We make a case distinction.
\begin{enumerate}
\item Case: $k<2^n+1$. This case plays out analagously to the proof of statement (i) in \cite[proof of Proposition 3]{RW84a}, but we will give the argument here for completeness and the reader's convenience. If $h\in\C_{G_n}(g)$, then modulo elements known to commute with $g$, we can write $h=x_1^{v_1}\cdots x_{k+1}^{v_{k+1}}$ with $v_i\in\IZ/2\IZ$. Then the assumption that $1=[g,h]$ yields
\begin{equation}\label{eq1}
1=[x_1,x_2]^{u_1v_2+u_2v_1}[x_2,x_3]^{u_2v_3+u_3v_2}\cdots[x_{k-1},x_k]^{u_{k-1}v_k+u_kv_{k-1}}[x_k,x_{k+1}]^{u_kv_{k+1}}.
\end{equation}
Now use the commutator relations to equivalently rewrite Formula (\ref{eq1}) into a pair of linear equations over $\IF_2$ in the variables $v_1,\ldots,v_{k+1}$. The final terms of these equations look as follows:
\begin{align*}
\ldots+u_kv_{k-1}+u_{k-1}v_k\hspace{1.6cm}=&0, \\
\ldots\ldots\ldots\ldots\ldots\ldots\ldots+u_kv_{k+1}=&0.
\end{align*}
Since $u_k\not=0$ by assumption and $v_{k+1}$ does not occur in the first equation, the two equations are $\IF_2$-linearly independent, which implies that $|G_n:\C_{G_n}(g)|=2^2=4>2$, a contradiction.
\item Case: $k=2^n+1$. If $h\in\C_{G_n}(g)$, then modulo elements known to commute with $g$ (namely $a$ and $b$), we can write $h=x_1^{v_1}\cdots x_{2^n+1}^{v_{2^n+1}}$ with $v_i\in\IZ/2\IZ$. The assumption that $1=[g,h]$ yields
\begin{equation}\label{eq2}
1=[x_1,x_2]^{u_1v_2+u_2v_1}[x_2,x_3]^{u_2v_3+u_3v_2}\cdots[x_{2^n},x_{2^n+1}]^{u_{2^n}v_{2^n+1}+u_{2^n+1}v_{2^n}}.
\end{equation}
Using the commutator relations, we can equivalently rewrite Formula (\ref{eq2}) into the following pair of linear equations over $\IF_2$ in the variables $v_1,\ldots,v_{2^n+1}$:
\begin{align*}
u_2v_1+u_1v_2+u_4v_3+u_3v_4+\ldots\ldots\ldots\ldots+u_{2^n}v_{2^n-1}+u_{2^n-1}v_{2^n}\hspace{2.04cm}&=0, \\
u_3v_2+u_2v_3+u_5v_4+u_4v_5+\ldots+u_{2^n-2}v_{2^n-1}+u_{2^n+1}v_{2^n}+u_{2^n}v_{2^n+1}&=0.
\end{align*}
We make a subcase distinction:
\begin{enumerate}
\item Subcase: At least one of $u_1,u_2,\ldots,u_{2^n}$ is nonzero. Then since $u_{2^n+1}\not=0$ by assumption, both equations are nonzero, and since $|G_n:\C_{G_n}(g)|=2$, they must be $\IF_2$-linearly dependent, which implies that $u_2=u_4=\cdots=u_{2^n}=0$ and $u_1=u_3=\cdots=u_{2^n+1}=1$. We conclude that $g\equiv x_1x_3\cdots x_{2^n+1}\Mod{C_n}$, whence $g^2=x_1^2x_3^2\cdots x_{2^n+1}^2=b\cdot 1 \cdots 1\cdot b=b^2=1$, contradicting our assumption that $\ord(g)=4$.
\item Subcase: $u_1=u_2=\cdots=u_{2^n}=0$. Then $g\equiv x_{2^n+1}\Mod{C_n}$. Similarly to the argument for $k=1$ above, we find that both asserted implications are true; the former because its sufficient condition is false ($g^2=x_{2^n+1}^2=b=[x_{2^n},x_{2^n+1}]=[x_{2^n},g]$), and the latter because its necessary condition is true.
\end{enumerate}
\end{enumerate}

\begin{mdframed}
Claim 2: Each of the three central order $2$ subgroups $\langle a\rangle$, $\langle b\rangle$ and $\langle ab\rangle$ is characteristic in $G_n$.
\end{mdframed}

Indeed, we have that
\begin{itemize}
\item $\langle a\rangle=[\langle x_1,C_n\rangle,G_n]$,
\item $\langle b\rangle=\mho^1(\langle x_1,C_n\rangle)=\langle\{g^2\mid g\in\langle x_1,C_n\rangle\}\rangle$ and
\item $\langle ab\rangle=\langle C_n\setminus(\langle a\rangle \cup \langle b\rangle)\rangle$.
\end{itemize}

\begin{mdframed}
Claim 3: For each $m\in\{0,\ldots,2^{n-1}\}$, the subgroup $\langle C_n,x_1,x_3,\ldots,x_{2m+1}\rangle$ is characteristic in $G_n$.
\end{mdframed}

We proceed by induction on $m$. The induction base, $m=0$, is clear by Claim 1. So assume that $m\geqslant1$, and that $\langle C_n,x_1,x_3,\ldots,x_{2m-1}\rangle$ is characteristic in $G_n$. Note: If $m=2^{n-1}$, then we are done by Claim 1, as
\begin{align*}
\langle C_n,x_1,x_3,\ldots,x_{2m+1}\rangle &= \langle\langle C_n,x_1,x_3,\ldots,x_{2m-1}\rangle, \langle C_n,x_{2m+1}\rangle\rangle \\
&= \langle\langle C_n,x_1,x_3,\ldots,x_{2m-1}\rangle, \langle C_n,x_{2^n+1}\rangle\rangle.
\end{align*}
We may thus also assume that $m<2^{n-1}$. Set
\[
H_m:=\C_{G_n}(\langle C_n,x_1,x_3,\ldots,x_{2m-1}\rangle)=\langle C_n,x_1,x_3,\ldots,x_{2m-1},x_{2m+1},x_{2m+2},\ldots,x_{2^n+1}\rangle.
\]
Note that by the induction hypothesis, $H_m$ is characteristic in $G_n$ and
\[
\zeta H_m=\langle C_n,x_1,x_3,\ldots,x_{2m-1}\rangle.
\]
We will show the following claim: \enquote{If $g\in H_m$, $[g,H_m]=\langle a\rangle$ and $|H_m:\C_{H_m}(g)|=2$, then $g\in x_{2m+1}\zeta H_m$.}

Note that since
\[
\langle C_n,x_1,x_3,\ldots,x_{2m+1}\rangle = \langle \zeta H_m, x_{2m+1}\zeta H_m\rangle,
\]
once this claim is proved, our inductive proof of Claim 3 is complete.

The proof of the claim is similar to the argument for Claim 1: Write
\[
g\equiv x_{2m+1}^{u_{2m+1}}\cdots x_k^{u_k}\Mod{\zeta H_m}
\]
with $u_i\in\IZ/2\IZ$ and $u_k\not=0$. Note: If $k=2m+1$, then the asserted implication is true because its necessary condition is true. So we may assume that $k>2m+1$. We make a case distinction.
\begin{enumerate}
\item Case: $k<2^n+1$. If $h\in\C_{H_m}(g)$, then modulo elements known to commute with $g$, we can write $h=x_{2m+1}^{v_{2m+1}}\cdots x_{k+1}^{v_{k+1}}$ with $v_i\in\IZ/2\IZ$. Our assumption that $1=[g,h]$ yields
\begin{equation}\label{eq3}
1=[x_{2m+1},x_{2m+2}]^{u_{2m+1}v_{2m+2}+u_{2m+2}v_{2m+1}}\cdots[x_{k-1},x_k]^{u_{k-1}v_k+u_kv_{k-1}}[x_k,x_{k+1}]^{u_kv_{k+1}}.
\end{equation}
Using the commutator relations, we can equivalently rewrite Formula (\ref{eq3}) into a pair of linear equations over $\IF_2$, which look like this:
\begin{align*}
\ldots+u_kv_{k-1}+u_{k-1}v_k\hspace{1.6cm}=&0, \\
\ldots\ldots\ldots\ldots\ldots\ldots\ldots+u_kv_{k+1}=&0.
\end{align*}
Since $u_k\not=0$, these two equations are $\IF_2$-linearly independent, which implies that $|H_m:\C_{H_m}(g)|=2^2=4>2$, a contradiction.
\item Case: $k=2^n+1$. If $h\in\C_{H_m}(g)$, then modulo elements known to commute with $g$, we can write $h=x_{2m+1}^{v_{2m+1}}\cdots x_{2^n+1}^{v_{2^n+1}}$ with $v_i\in\IZ/2\IZ$. Our assumption that $1=[g,h]$ yields
\begin{equation}\label{eq4}
1=[x_{2m+1},x_{2m+2}]^{u_{2m+1}v_{2m+2}+u_{2m+2}v_{2m+1}}\cdots[x_{2^n},x_{2^n+1}]^{u_{2^n}v_{2^n+1}+u_{2^n+1}v_{2^n}}.
\end{equation}
Using the commutator relations, we can equivalently rewrite Formula (\ref{eq4}) into the following pair of linear equations over $\IF_2$:
\begin{align*}
u_{2m+2}v_{2m+1}+u_{2m+1}v_{2m+2}+u_{2m+4}v_{2m+3}+.\hspace{0.015cm}\ldots\hspace{0.015cm}.+u_{2^n}v_{2^n-1}+u_{2^n-1}v_{2^n}\hspace{2cm}&=0, \\
u_{2m+3}v_{2m+2}+u_{2m+2}v_{2m+3}+\ldots+u_{2^n-2}v_{2^n-1}+u_{2^n+1}v_{2^n}+u_{2^n}v_{2^n+1}&=0.
\end{align*}
We make a subcase distinction:
\begin{enumerate}
\item Subcase: At least one of $u_{2m+1},\ldots,u_{2^n}$ is nonzero. Then since $u_{2^n+1}\not=0$ by assumption, both equations are nonzero, and since $|H_m:\C_{H_m}(g)|=2$, the equations must be $\IF_2$-linearly dependent. It follows that $u_{2m+2}=u_{2m+4}=\cdots=u_{2^n}=0$ and $u_{2m+1}=u_{2m+3}=\cdots=u_{2^n+1}=1$. Hence
\[
g\equiv x_{2m+1}x_{2m+3}\cdots x_{2^n+1}\Mod{\zeta H_m},
\]
and therefore
\[
[g,x_{2m+2}]=[x_{2m+1},x_{2m+2}]\cdot [x_{2m+2},x_{2m+3}]=a\cdot b\not=a,
\]
contradicting our assumption that $[g,H_m]=\langle a\rangle$.
\item Subcase: $u_{2m+1}=u_{2m+2}=\cdots=u_{2^n}=0$. Then $g\equiv x_{2^n+1}\Mod{\zeta H_m}$, and thus
\[
[g,H_m]=\langle[x_{2^n},x_{2^n+1}]\rangle=\langle b\rangle,
\]
contradicting our assumption that $[g,H_m]=\langle a\rangle$.
\end{enumerate}
\end{enumerate}

In what follows, $\alpha$ is an arbitrary automorphism of $G_n$. For $i\in\{1,2,\ldots,2^n+1\}$, we can write
\[
x_i^{\alpha}=x_1^{\alpha_{i,1}}x_2^{\alpha_{i,2}}\cdots x_{2^n+1}^{\alpha_{i,2^n+1}}\Mod{C_n}
\]
with $\alpha_{i,j}\in\IZ/2\IZ$.

\begin{mdframed}
Claim 4: The following hold:
\begin{enumerate}
\item $\alpha_{1,1}=1$, and $\alpha_{1,j}=0$ for $j>1$.
\item $\alpha_{2^n+1,2^n+1}=1$, and $\alpha_{2^n+1,j}=0$ for $j<2^n+1$.
\item For $i\in\{1,\ldots,2^{n-1}-1\}$ and $j\in\{1,\ldots,2^{n-1}\}$:
\begin{enumerate}
\item $\alpha_{2i+1,2j}=0$.
\item If $j>i$, then $\alpha_{2i+1,2j+1}=0$.
\item $\alpha_{2i+1,1}=0$, and $\alpha_{2i+1,2i+1}=1$.
\end{enumerate}
\end{enumerate}
\end{mdframed}

Indeed, statements (1) and (2) are clear by Claim 1. Moreover, statements (3,a) and (3,b) are clear by Claim 3. As for statement (3,c), note that if $\alpha_{2i+1,1}=1$, then $\ord(x_{2i+1}^{\alpha})=4$, a contradiction. Finally, $\alpha_{2i+1,2i+1}=1$ since otherwise, by Claim 3,
\[
x_{2i+1}^{\alpha}\in\langle C_n,x_1,x_3,\ldots,x_{2i-1}\rangle,
\]
which contradicts the characteristicity in $G_n$ of $\langle C_n,x_1,x_3,\ldots,x_{2i-1}\rangle$.

\begin{mdframed}
Claim 5: For each $i\in\{1,\ldots,2^{n-1}+1\}$, we have the following:
\begin{enumerate}
\item The subgroup $\langle x_{2i-1},C_n\rangle$ is characteristic in $G_n$.
\item For $j=1,2,\ldots,i-1$, we have
\begin{enumerate}
\item $\alpha_{2j,2j}=1$.
\item $x_{2j}^{\alpha}x_{2j}^{-1}\equiv x_1^{\alpha_{2j,1}}x_3^{\alpha_{2j,3}}\cdots x_{2i-1}^{\alpha_{2j,2i-1}}x_{2i}^{\alpha_{2j,2i}}\cdots x_{2^n+1}^{\alpha_{2j,2^n+1}}\Mod{C_n}$.
\end{enumerate}
\end{enumerate}
\end{mdframed}

This is analogous to the proof of statement (iv) in \cite[proof of Proposition 3]{RW84a}, but we will give the argument in detail here, for the reader's convenience and to make sure it is not a problem that (in contrast to the situation in \cite[proof of Proposition 3]{RW84a}) we do not know at this point whether $x_{2j,1}=0$.

We proceed by induction on $i$. The case \enquote{$i=1$} is clear by Claim 1, and the case \enquote{$i=2$} is clear by Claim 4(3) and the observation that if $\alpha_{2,2}=0$, then $x_2^{\alpha}\in\C_{G_n}(\langle x_1,C_n\rangle)$, which contradicts the characteristicity of $\C_{G_n}(\langle x_1,C_n\rangle)$ in $G_n$.

We may thus assume that $i\geqslant2$, and that the assertion has been proved for $i$. Let $j\in\{1,2,\ldots,i-1\}$. Then by the induction hypothesis,
\[
x_{2j}^{\alpha}\equiv x_{2j}\cdot x_1^{\alpha_{2j,1}}x_3^{\alpha_{2j,3}}\cdots x_{2i-1}^{\alpha_{2j,2i-1}}x_{2i}^{\alpha_{2j,2i}}\cdots x_{2^n+1}^{\alpha_{2j,2^n+1}}\Mod{C_n}
\]
and
\[
x_{2i-1}^{\alpha}\equiv x_{2i-1}\Mod{C_n}.
\]
Hence if $j<i-1$, it follows from $1=[x_{2j},x_{2i-1}]$ that
\[
1=[x_{2i-1},x_{2i}]^{\alpha_{2j,2i}}=a^{\alpha_{2j,2i}},
\]
and thus $\alpha_{2j,2i}=0$. And if $j=i-1$, it follows from
\[
b=[x_{2i-2},x_{2i-1}]=[x_{2j},x_{2i-1}]
\]
that
\[
b=[x_{2i-2},x_{2i-1}]\cdot[x_{2i-1},x_{2i}]^{\alpha_{2j,2i}}=b\cdot a^{\alpha_{2j,2i}},
\]
whence, again, $\alpha_{2j,2i}=0$. We just showed that
\begin{equation}\label{eq5}
\alpha_{2j,2i}=0\text{ for }j=1,2,\ldots,i-1.
\end{equation}
Now, by the induction hypothesis and Formula (\ref{eq5}), we have
\[
x_{2j}^{\alpha}\equiv x_{2j}\cdot x_1^{\alpha_{2j,1}}x_3^{\alpha_{2j,3}}\cdots x_{2i+1}^{\alpha_{2j,2i+1}}x_{2i+2}^{\alpha_{2j,2i+2}}\cdots x_{2^n+1}^{\alpha_{2j,2^n+1}}\Mod{C_n}
\]
and
\[
x_{2i+1}^{\alpha}\equiv x_3^{\alpha_{2i+1,3}}x_5^{\alpha_{2i+1,5}}\cdots x_{2i+1}^{\alpha_{2i+1,2i+1}}\Mod{C_n}.
\]
It follows from $1=[x_{2j},x_{2i+1}]$ that
\begin{align*}
1 &=[x_{2j-1},x_{2j}]^{\alpha_{2i+1,2j-1}}[x_{2j},x_{2j+1}]^{\alpha_{2i+1,2j+1}}[x_{2i+1},x_{2i+2}]^{\alpha_{2j,2i+2}\alpha_{2i+1,2i+1}} \\
&=a^{\alpha_{2i+1,2j-1}+\alpha_{2j,2i+2}\alpha_{2i+1,2i+1}}b^{\alpha_{2i+1,2j+1}},
\end{align*}
which implies that $\alpha_{2i+1,2j+1}=0$. We just showed that
\begin{equation}\label{eq6}
\alpha_{2i+1,2j+1}=0\text{ for }j=1,2,\ldots,i-1.
\end{equation}
Together with Claim 4(3), Formula (\ref{eq6}) implies that $\langle x_{2i+1},C_n\rangle$ is characteristic in $G_n$. Finally, by definition,
\[
x_{2i}^{\alpha}\equiv x_1^{\alpha_{2i,1}}x_2^{\alpha_{2i,2}}\cdots x_{2^n+1}^{\alpha_{2i,2^n+1}}\Mod{C_n},
\]
and by the induction hypothesis,
\[
x_{2j+1}^{\alpha}\equiv x_{2j+1}\Mod{C_n}.
\]
If $j<i-1$, then it follows from $1=[x_{2i},x_{2j+1}]$ that
\[
1=[x_{2j},x_{2j+1}]^{\alpha_{2i,2j}}[x_{2j+1},x_{2j+2}]^{\alpha_{2i,2j+2}}=b^{\alpha_{2i,2j}}a^{\alpha_{2i,2j+2}},
\]
whence $\alpha_{2i,2j}=\alpha_{2i,2j+2}=0$. This shows that
\begin{equation}\label{eq7}
\alpha_{2i,2j}=0\text{ for }j=1,\ldots,i-1.
\end{equation}
To complete the inductive proof of Claim 5, it remains to show that $\alpha_{2i,2i}=1$. Assume otherwise. Then by Formula (\ref{eq7}), $x_{2i}^{\alpha}\in\C_{G_n}(\langle C_n,x_1,x_3,\ldots,x_{2i-1}\rangle)$, which contradicts the characteristicity in $G_n$ of $\C_{G_n}(\langle C_n,x_1,x_3,\ldots,x_{2i-1}\rangle)$.

\begin{mdframed}
Claim 6: For all $i,j\in\{1,2,\ldots,2^{n-1}\}$, we have
\[
\alpha_{2i,2j}=\begin{cases}1, & \text{if }i=j, \\ 0, & \text{otherwise.}\end{cases}
\]
In other words,
\[
x_{2i}^{\alpha}\equiv x_{2i}\cdot x_1^{\alpha_{2i,1}}x_3^{\alpha_{2i,3}}\cdots x_{2^n+1}^{\alpha_{2i,2^n+1}}\Mod{C_n}.
\]
\end{mdframed}

This is immediate from Claim 5 with $i=2^{n-1}+1$.

Note that by Claims 5 and 6, we now know that modulo $C_n$, $\alpha$ fixes each of $x_1,x_3,\ldots,x_{2^n+1}$, and maps each of $x_2,x_4,\ldots,x_{2^n}$ to itself times some product of $x_1,x_3,\ldots,x_{2^n+1}$. The next claim gives some restrictions on these odd-index factors:

\begin{mdframed}
Claim 7: Let $i,j\in\{1,2,\ldots,2^{n-1}\}$. Then the following hold:
\begin{enumerate}
\item $\alpha_{2i,2i-1}=0$.
\item $\alpha_{2j,2i-1}=\alpha_{2i,2j-1}$ and $\alpha_{2j,2i+1}=\alpha_{2i,2j+1}$.
\end{enumerate}
\end{mdframed}

Indeed, for statement (1), observe that if $\alpha_{2i,2i-1}=1$, then
\[
x_{2i}^{\alpha}\equiv x_{2i}\cdot x_1^{\alpha_{2i,1}}x_3^{\alpha_{2i,3}}\cdots x_{2i-3}^{\alpha_{2i,2i-3}}x_{2i-1}x_{2i+1}^{\alpha_{2i,2i+1}}\cdots x_{2^n+1}^{\alpha_{2i,2^n+1}}\Mod{C_n}
\]
and thus
\[
(x_{2i}^{\alpha})^2=[x_{2i-1},x_{2i}]\cdot[x_{2i},x_{2i+1}]^{\alpha_{2i,2i+1}}\cdot x_{2i}^2\cdot\prod_{k=1}^{2^{n-1}}{x_{2k+1}^{2\alpha_{2i,2k+1}}}=a\cdot b^{\alpha_{2i,2i+1}}\cdot b^{\alpha_{2i,1}+\alpha_{2i,2^n+1}}\not=1,
\]
which contradicts the fact that $\ord(x_{2i})=2$.

For statement (2), note that
\[
x_{2i}^{\alpha}\equiv x_{2i}\cdot x_1^{\alpha_{2i,1}}x_3^{\alpha_{2i,3}}\cdots x_{2i-3}^{\alpha_{2i,2i-3}}x_{2i+1}^{\alpha_{2i,2i+1}}x_{2i+3}^{\alpha_{2i,2i+3}}\cdots x_{2^n+1}^{\alpha_{2i,2^n+1}}\Mod{C_n}
\]
and
\[
x_{2j}^{\alpha}\equiv x_{2j}\cdot x_1^{\alpha_{2j,1}}x_3^{\alpha_{2j,3}}\cdots x_{2j-3}^{\alpha_{2j,2j-3}}x_{2j+1}^{\alpha_{2j,2j+1}}x_{2j+3}^{\alpha_{2j,2j+3}}\cdots x_{2^n+1}^{\alpha_{2j,2^n+1}}\Mod{C_n}.
\]
It follows from $1=[x_{2i},x_{2j}]$ that
\begin{align*}
1 &=[x_{2i-1},x_{2i}]^{\alpha_{2j,2i-1}}[x_{2i},x_{2i+1}]^{\alpha_{2j,2i+1}}[x_{2j-1},x_{2j}]^{\alpha_{2i,2j-1}}[x_{2j},x_{2j+1}]^{\alpha_{2i,2j+1}} \\
&=a^{\alpha_{2j,2i-1}+\alpha_{2i,2j-1}}\cdot b^{\alpha_{2j,2i+1}+\alpha_{2i,2j+1}},
\end{align*}
whence indeed, $\alpha_{2j,2i-1}=\alpha_{2i,2j-1}$ and $\alpha_{2j,2i+1}=\alpha_{2i,2j+1}$, as required.

\begin{mdframed}
Claim 8: The following hold:
\begin{enumerate}
\item For each $i\in\{1,2,\ldots,2^{n-1}-1\}$, the subgroup $\langle x_{2i},C_n\rangle$ is characteristic in $G_n$.
\item The coset union $x_{2^n}C_n \cup x_{2^n}x_{2^n+1}C_n$ is a characteristic subset of $G_n$.
\end{enumerate}
\end{mdframed}

Note that by Claim 7(2), if we know that $\alpha_{2k,2l-1}=0$ for some $k\in\{1,2,\ldots,2^{n-1}\}$ and some $l\in\{1,2,\ldots,2^{n-1}+1\}$, then we can actually conclude that $\alpha_{e,o}=0$ for all pairs $(e,o)\in\{1,2,\ldots,2^n+1\}^2$ where $e$ is even, $o$ is odd and $e+o=2k+2l-1$. This is because
\[
\alpha_{2k,2l-1}=\alpha_{2l,2k-1}=\alpha_{2k-2,2l+1}\text{ if }2k-2,2l+1\in\{1,\ldots,2^n+1\}
\]
and
\[
\alpha_{2k,2l-1}=\alpha_{2l-2,2k+1}=\alpha_{2k+2,2l-3}\text{ if }2l-3,2k+2\in\{1,2,\ldots,2^n+1\}.
\]
Therefore, by Claim 7(1), we conclude that $\alpha_{e,o}=0$ whenever $e+o\equiv3\Mod{4}$. We claim that more generally, for each $k=2,3,\ldots,n+1$, $\alpha_{e,o}=0$ whenever $e+o\equiv 1+2^{k-1}\Mod{2^k}$. We will show this by induction on $k$, with the induction base, $k=2$, done just above. So assume now that $k\leqslant n$, and that we know that $\alpha_{e,o}=0$ whenever $e+o\equiv 1+2^{k-1}\Mod{2^k}$. Then, in particular, $\alpha_{\epsilon,1}=\alpha_{\epsilon,2^n+1}=0$ whenever $\epsilon\in\{1,2\ldots,2^n+1\}$ and $\epsilon\equiv2^{k-1}\Mod{2^k}$. If $\alpha_{\epsilon,\epsilon+1}=1$, it follows (in view of Claims 6 and 7(1)) that
\[
(x_{\epsilon}^{\alpha})^2=[x_{\epsilon},x_{\epsilon+1}]=b\not=1,
\]
a contradiction. Hence $\alpha_{\epsilon,\epsilon+1}=0$ for all $\epsilon\in\{1,2,\ldots,2^n+1\}$ with $\epsilon\equiv 2^{k-1}\Mod{2^k}$, and thus $\alpha_{e,o}=0$ for all pairs $(e,o)\in\{1,2,\ldots,2^n+1\}^2$ where $e$ is even, $o$ is odd and $e+o\equiv 1+2^k\Mod{2^{k+1}}$, as we wanted to show.

An equivalent reformulation of what we just proved by induction on $k$ is that $\alpha_{e,o}=0$ for all pairs $(e,o)\in\{1,2,\ldots,2^n+1\}^2$ where $e$ is even and $o$ is odd \emph{unless} $e+o\equiv 1\Mod{2^{n+1}}$, i.e., unless $(e,o)=(2^n,2^n+1)$. Together with Claim 6, this proves Claim 8.

We can now conclude the proof of Proposition \ref{2GroupProp} as follows: By Claims 5 and 8, we have that modulo $\Aut_{\cent}(G_n)$, every automorphism of $G_n$ either fixes all generators of $G_n$, or it maps $x_{2^n}\mapsto x_{2^n}x_{2^n+1}$ while fixing all the other generators of $G_n$. In other words, modulo $\Aut_{\cent}(G_n)$, every automorphism of $G_n$ is equal to $\id_{G_n}$ or $\alpha_n$ as defined in Remark \ref{2GroupRem}(3), which is just what we wanted to show (see the beginning of this proof).
\end{proof}

\section{Finite groups \texorpdfstring{$G$}{G} with both \texorpdfstring{$\maol(G)$}{maol(G)} and \texorpdfstring{$d(G)$}{d(G)} bounded}\label{sec4}

This section is concerned with the proof of Theorem \ref{mainTheo}(3). Recall from Subsection \ref{subsecPrep1} that $d(G)$ denotes the minimum size of a generating subset of the finite group $G$. We note that if $G$ is any finite group with $\maol(G)\leqslant c$, then in particular, all conjugacy classes of $G$ are of length at most $c$, and so if $d(G)\leqslant d$, then the center $\zeta G$, being the intersection of the centralizers of the elements of any fixed generating subset of $G$, has index at most $c^d$ in $G$. Hence an upper bound on $|G|$ could be derived from an explicit version of Robinson and Wiegold's theorem \cite[Theorem 1]{RW84a}, more precisely from an explicit upper bound on $|\zeta G|$ for all finite groups $G$ with $\maol(G)\leqslant c$. As noted in \cite[Remark (i) at the end of Section 1]{RW84a}, the proof of \cite[Theorem 1]{RW84a} actually provides such an explicit upper bound, but it is complicated and was not worked out explicitly by Robinson and Wiegold.

Rather than proving our Theorem \ref{mainTheo}(3) by making Robinson and Wiegold's result explicit, we will exploit the fact that a related, celebrated result of B.H.~Neumann (which motivated Robinson and Wiegold's paper) has known explicit versions. This also means that modulo known, explicitly spelled out results, our proof will be elementary (the Robinson-Wiegold proof uses cohomological methods).

A \emph{BFC-group} is a group $G$ such that the maximum conjugacy class length in $G$ is bounded from above by some constant. The above mentioned theorem of B.H.~Neumann states that a group $G$ is a BFC-group if and only if the commutator subgroup $G'$ is finite (see \cite[Theorem 3.1]{Neu54a}). Later, an explicit version of Neumann's theorem was proved by Wiegold \cite[Theorem 4.7]{Wie57a}, stating that if $G$ is a group in which all conjugacy classes are of length at most $\ell$, then the order of $G'$ is at most $f(\ell)$ for some explicit function $f$. The currently best known choice for $f$ is the one from the following theorem:

\begin{theorem}\label{gmTheo}
(Guralnick-Mar{\'o}ti, \cite[Theorem 1.9]{GM11a})
Let $\ell$ be a positive integer, and let $G$ be a group such that all conjugacy classes of $G$ are of length at most $\ell$. Then
\[
|G'|\leqslant\ell^{\frac{1}{2}(7+\log{\ell})}.
\]
\end{theorem}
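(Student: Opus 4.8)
The plan is to prove the statement in three stages: a reduction to finite groups, a per-chief-factor size estimate, and an accumulation step along a chief series, with the sharp exponent emerging only from careful bookkeeping.

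First I would reduce to the case that $G$ is finite. Since $G'$ is the directed union of the derived subgroups $H'$ over the finitely generated subgroups $H\leqslant G$, and every such $H$ again has all conjugacy classes of length at most $\ell$, it suffices to bound $|H'|$ uniformly (any finitely many elements of $G'$ already lie in some $H'$). For a finitely generated $H=\langle h_1,\ldots,h_r\rangle$ the centre $\zeta H=\bigcap_{i=1}^r\C_H(h_i)$ is an intersection of finitely many subgroups of index at most $\ell$, hence has finite index; thus $H$ is centre-by-finite and $H'$ is finite by Schur's theorem. So I may assume $G$ is finite, and it remains to bound $\log{|G|'}$, more precisely $\log|G'|$, by $\frac12(7+\log{\ell})\log{\ell}$ independently of $|G|$.

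For finite $G$ I would argue along a chief series of $G$ passing through $G'$, writing $\log|G'|$ as the sum of $\log|M/L|$ over the chief factors $M/L$ with $M\leqslant G'$. The heart of the matter is a lower bound for the largest class length of $G$ in terms of the size of a single chief factor: such a factor $M/L$ is characteristically simple, hence either elementary abelian of order $p^{d}$ or a direct product of copies of a nonabelian simple group, and in either case the action of $G$ on $M/L$ must produce an element $x$ for which $\C_{M}(x)$ (equivalently $\C_{G}(x)$) has large index, forcing $|M/L|$ to be bounded by an explicit power of $\ell$. The number of chief factors inside $G'$ is then controlled by the same principle: a long chain would force $G'$ to contain a section of large rank, and an element acting nontrivially on such a section has a class of length exceeding $\ell$. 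Balancing the per-factor size against the number of factors is exactly what yields a bound of the shape $\ell^{O(\log{\ell})}$, that is, $\log|G'|=O\bigl((\log{\ell})^2\bigr)$.

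The step I expect to be the main obstacle is upgrading this $\ell^{O(\log{\ell})}$ to the \emph{sharp} exponent $\frac12(7+\log{\ell})$. This demands precise lower bounds for the maximal conjugacy class length of an element acting on a characteristically simple group: in the affine case one needs an element of an elementary abelian chief factor with small centralizer (a regular-orbit or small-fixed-space estimate), and in the almost-simple case one needs analogous fixed-point-ratio bounds. Both kinds of estimate rest on detailed structural information about finite simple and almost-simple groups, and threading the resulting constants through the chief-series induction — especially handling the nonabelian composition factors of $G'$ — is the genuinely delicate part. It is precisely here that I would invoke the deep input behind the Guralnick--Mar\'oti estimate, with the clean closed form of the exponent falling out of optimizing the final recursion.
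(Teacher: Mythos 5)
This statement is not proved in the paper at all: it is quoted verbatim from Guralnick and Mar\'oti \cite[Theorem 1.9]{GM11a} and used as a black box in Section \ref{sec4}, so the comparison here is between your sketch and the actual Guralnick--Mar\'oti argument. Your first stage, the reduction to finite $G$, is correct and standard: for a finitely generated subgroup $H$, the centre $\zeta H$ is an intersection of finitely many centralizers of index at most $\ell$, hence of finite index, so $H'$ is finite by Schur's theorem; and since $\zeta H$ is then finitely generated abelian, one may take $N$ to be a torsion-free complement of its torsion subgroup, so that $N\cap H'=\{1\}$, $H/N$ is finite, inherits the class-length bound (classes of a quotient are images of classes), and satisfies $(H/N)'\cong H'$. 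Up to that point the proposal is sound.

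Beyond that there is a genuine gap, in two respects. First, stages two and three contain the entire content of the theorem, and at the decisive moment you write that you would \enquote{invoke the deep input behind the Guralnick--Mar\'oti estimate} --- that is circular, since that input \emph{is} the theorem (or its proof). Second, and more concretely, the mechanism you propose --- on each chief factor find an element whose centralizer has large index, and bound the number of chief factors by a rank argument --- fails exactly in the case that governs the shape of the bound, namely nilpotency class $2$. There $G'\leqslant\zeta G$, every chief factor below $G'$ is central, no element of $G$ acts nontrivially on any such factor, and your rank remark gives no bound on the length of the series. In that case the only available constraint is commutator counting: for class at most $2$ the map $h\mapsto[g,h]$ is a homomorphism, so $[g,G]$ is a subgroup of order $|g^G|\leqslant\ell$, and $G'$ is generated by such subgroups --- the Neumann--Wiegold line of argument, which your sketch omits entirely. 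Moreover, the class-$2$ case shows the $\ell^{\Theta(\log\ell)}$ shape is not mere bookkeeping: in the free class-$2$ group of exponent $p$ on $n$ generators, every conjugacy class has length at most $p^{n-1}=:\ell$ while $|G'|=p^{n(n-1)/2}=\ell^{n/2}=\ell^{(1+\log_p\ell)/2}$, so the logarithm in the exponent is forced by central commutator growth, not by simple-group actions. Finally, the actual Guralnick--Mar\'oti proof is not a chief-series recursion of the kind you describe: its new ingredient is their CFSG-dependent theorem on the average dimension of fixed point spaces, applied after reductions in which the abelian/central situation is handled by precisely the commutator-theoretic considerations above. In short, your stage one is fine, but the core of the argument is missing, and the route you indicate would already break on class-$2$ groups before any of the delicate almost-simple estimates could enter.
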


In our proof of Theorem \ref{mainTheo}(3), we will also need some simple lower bounds on the number of automorphisms of a finite abelian $p$-group $P$, which can be derived from the following exact formula for $|\Aut(P)|$:

\begin{theorem}\label{hrTheo}
(Hillar-Rhea, \cite[Theorem 4.1]{HR07a})
Let $p$ be a prime, and let $P$ be a finite abelian $p$-group. Write $P\cong\IZ/p^{e_1}\IZ\times\cdots\times\IZ/p^{e_n}\IZ$ with $1\leqslant e_1\leqslant\cdots\leqslant e_n$. For $k=1,\ldots,n$, set $d_k:=\max\{l\in\{1,\ldots,n\}\mid e_l=e_k\}$ and $c_k:=\min\{l\in\{1,\ldots,n\}\mid e_l=e_k\}$. Then
\[
|\Aut(P)|=\prod_{k=1}^n{(p^{d_k}-p^{k-1})}\cdot\prod_{j=1}^n{p^{e_j(n-d_j)}}\prod_{i=1}^n{p^{(e_i-1)(n-c_i+1)}}.
\]
\end{theorem}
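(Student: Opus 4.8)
The plan is to realize $\End(P)$ concretely as a set of integer matrices modulo congruences, to characterize which of these represent automorphisms by reduction modulo $p$, and finally to count. Fix generators $g_1,\dots,g_n$ of $P$ with $\ord(g_i)=p^{e_i}$ and $e_1\leqslant\cdots\leqslant e_n$. An endomorphism $\varphi$ is determined by $\varphi(g_i)=\sum_{j=1}^n a_{ij}g_j$, and well-definedness of $\varphi$ (i.e.\ $p^{e_i}\varphi(g_i)=0$) is equivalent to the divisibility conditions $a_{ij}\equiv 0\pmod{p^{\max(0,e_j-e_i)}}$, where $a_{ij}$ is read modulo $p^{e_j}$. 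Thus the $(i,j)$-entry ranges over a set of size $p^{\min(e_i,e_j)}$, recovering $|\End(P)|=\prod_{i,j}p^{\min(e_i,e_j)}$; I would record for each entry the number of free $p$-adic digits, which is $e_j$ when $e_i\geqslant e_j$ and $e_i$ when $e_i<e_j$.

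Next I would pin down the automorphisms. Since $P$ is a finite $p$-group, $\varphi\in\End(P)$ is an automorphism if and only if it is surjective, and since $pP$ is the Frattini subgroup of $P$ (the set of non-generators), this holds if and only if the induced endomorphism $\overline{\varphi}$ of $P/pP\cong\IF_p^n$ is invertible, i.e.\ the reduced matrix $(\overline{a_{ij}})\in\Mat_n(\IF_p)$ lies in $\GL_n(\IF_p)$. The point of ordering the $e_i$ increasingly is that $a_{ij}\equiv 0\pmod p$ whenever $e_i<e_j$, so that $(\overline{a_{ij}})$ is block lower-triangular once the indices are grouped into the blocks on which $e_i$ is constant. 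Hence $(\overline{a_{ij}})$ is invertible precisely when each diagonal block, an $n_t\times n_t$ matrix over $\IF_p$ (with $n_t$ the multiplicity of the $t$-th distinct exponent), is invertible; the strictly-below-diagonal reduced entries (those with $e_i>e_j$) are then completely free.

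The counting step assembles these observations. For entries with $e_i<e_j$ all $e_i$ digits are free; for entries with $e_i>e_j$ the lowest digit is free and the remaining $e_j-1$ digits are free; for entries within a common diagonal block the lowest digits are constrained jointly to give an element of $\GL_{n_t}(\IF_p)$, while the remaining $e_j-1$ digits of each entry stay free. Collecting the $\GL$-factors and the powers of $p$ coming from the free digits yields a formula of the shape $\prod_t|\GL_{n_t}(\IF_p)|\cdot p^{N}$ for an explicit exponent $N$ expressible as a double sum over the pairs $(i,j)$, organized according to whether $e_i<e_j$, $e_i=e_j$, or $e_i>e_j$.

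The main obstacle is purely bookkeeping: rewriting this expression in the stated closed form. Here I would use the elementary identity $\prod_{k=c}^{d}(p^{d}-p^{k-1})=p^{(c-1)(d-c+1)}\,|\GL_{d-c+1}(\IF_p)|$, valid for a block of indices $c,\dots,d$, to see that $\prod_{k=1}^n(p^{d_k}-p^{k-1})$ already accounts for all the $\GL$-factors together with a first chunk of the power of $p$ (namely $\sum_t(c_t-1)n_t$). The remaining powers $\prod_{j}p^{e_j(n-d_j)}$ and $\prod_i p^{(e_i-1)(n-c_i+1)}$ must then be matched against $N$ by reindexing the contributions of the free digits. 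Verifying this exponent identity is the only genuinely delicate part, and it is where I expect the bulk of the (elementary) work to lie.
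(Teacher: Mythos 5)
The paper does not actually prove this theorem---it imports it verbatim from Hillar--Rhea \cite[Theorem 4.1]{HR07a}---and your outline is precisely the strategy of that cited proof: realize $\End(P)$ as integer matrices subject to the congruences $a_{ij}\equiv 0\pmod{p^{\max(0,e_j-e_i)}}$, detect automorphisms via invertibility of the block-lower-triangular reduction modulo $p$ (Frattini argument), and count free $p$-adic digits. Your plan is correct, and the one step you flagged as delicate does close up: writing $f_t$, $n_t$ for the distinct exponents and their multiplicities, both your digit count $N$ and the exponent $M$ extracted from the stated formula (via your identity $\prod_{k=c}^{d}(p^{d}-p^{k-1})=p^{(c-1)(d-c+1)}|\GL_{d-c+1}(\IF_p)|$, which is valid) reduce to $\sum_t f_t n_t^2+2\sum_{s>t}f_t n_t n_s-\sum_t n_t^2$, so only that elementary verification separates your sketch from a complete proof.
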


\begin{corollary}\label{hrCor}
With notation as in Theorem \ref{hrTheo}, and assuming that $P$ is nontrivial, we have $|\Aut(P)|\geqslant\max\{p-1,p^{e_n-1}\}$.
\end{corollary}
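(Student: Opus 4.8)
The plan is to read off the bound directly from the Hillar--Rhea formula in Theorem~\ref{hrTheo}, since Corollary~\ref{hrCor} only asks for a lower bound on $|\Aut(P)|$ and the formula is an exact product of nonnegative prime-power factors. Writing $P\cong\IZ/p^{e_1}\IZ\times\cdots\times\IZ/p^{e_n}\IZ$ with $1\leqslant e_1\leqslant\cdots\leqslant e_n$ and $n\geqslant1$ (as $P$ is nontrivial), my strategy is to isolate two individual factors from the three products appearing in Theorem~\ref{hrTheo}, each of which already dominates one of the two quantities $p-1$ and $p^{e_n-1}$, and then to argue that every remaining factor is at least $1$ so that dropping them only decreases the product. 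The whole argument is essentially bookkeeping on the indices $c_k$ and $d_k$.

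First I would establish the bound $|\Aut(P)|\geqslant p-1$. The first product is $\prod_{k=1}^n(p^{d_k}-p^{k-1})$, and I look at the factor with $k=1$. Since $d_1=\max\{l\mid e_l=e_1\}\geqslant 1$ and $k-1=0$, this factor equals $p^{d_1}-1\geqslant p-1$. Every other factor in the first product is of the form $p^{d_k}-p^{k-1}$ with $d_k\geqslant k$ (because $d_k$ is the largest index sharing the value $e_k$, hence $d_k\geqslant k$), so each is at least $p^k-p^{k-1}=p^{k-1}(p-1)\geqslant1$; and the factors in the other two products are nonnegative powers of $p$, hence at least $1$. Discarding all factors except the $k=1$ one then yields $|\Aut(P)|\geqslant p-1$.

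Next I would establish $|\Aut(P)|\geqslant p^{e_n-1}$. Here the natural factor to extract comes from the third product $\prod_{i=1}^n p^{(e_i-1)(n-c_i+1)}$; taking $i=n$ gives exponent $(e_n-1)(n-c_n+1)$, and since $c_n\leqslant n$ we have $n-c_n+1\geqslant1$, so this single factor is at least $p^{e_n-1}$. As before, all the other factors across the three products are each at least $1$ by the same index inequalities ($d_k\geqslant k$ for the first product, nonnegative $p$-exponents for the second and third), so discarding them preserves the inequality. Combining the two displayed bounds gives $|\Aut(P)|\geqslant\max\{p-1,p^{e_n-1}\}$, as claimed.

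I expect no serious obstacle here; the one point requiring a moment's care is verifying the index inequalities $d_k\geqslant k$ and $c_k\leqslant k$, which guarantee that the neglected factors are genuinely $\geqslant1$ rather than accidentally small or (in the first product) possibly causing a sign issue—one must check that $p^{d_k}-p^{k-1}\geqslant0$, which follows from $d_k\geqslant k>k-1$. Once that is in hand the corollary is immediate.
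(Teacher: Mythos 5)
Your proof is correct and is essentially identical to the paper's: both bounds are obtained by isolating the $k=1$ factor $p^{d_1}-1\geqslant p-1$ of the first product and the $i=n$ factor $p^{(e_n-1)(n-c_n+1)}\geqslant p^{e_n-1}$ of the third, then bounding all remaining factors below by $1$. Your explicit verification that $d_k\geqslant k$ (so that $p^{d_k}-p^{k-1}\geqslant 1$) is a point the paper leaves implicit, but it is the same argument.
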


\begin{proof}
Note that by definition, $d_1\geqslant 1$ and $c_n\leqslant n$. It follows that
\[
|\Aut(P)|=(p^{d_1}-1)\cdot\prod_{k=2}^n{(p^{d_k}-p^{k-1})}\cdot\prod_{j=1}^n{p^{e_j(n-d_j)}}\prod_{i=1}^n{p^{(e_i-1)(n-c_i+1)}}\geqslant p-1
\]
and
\begin{align*}
|\Aut(P)| &=\prod_{k=1}^n{(p^{d_k}-p^{k-1})}\cdot\prod_{j=1}^n{p^{e_j(n-d_j)}}\prod_{i=1}^{n-1}{p^{(e_i-1)(n-c_i+1)}}\cdot p^{(e_n-1)(n-c_n+1)} \\
&\geqslant1\cdot p^{(e_n-1)(n-c_n+1)}\geqslant p^{(e_n-1)(n-n+1)}=p^{e_n-1},
\end{align*}
as required.
\end{proof}

Furthermore, we will make use of the following upper bound on the first Chebyshev function:

\begin{theorem}\label{rsTheo}
(Rosser-Schoenfeld, \cite[Theorem 9]{RS62a})
Let
\[
\vartheta:\left[0,\infty\right)\rightarrow\left[0,\infty\right), x\mapsto\sum_{p\leqslant x}{\log{p}},
\]
where the summation index $p$ ranges over primes, be the first Chebyshev function. Then for all $x>0$, $\vartheta(x)<1.01624x$.
\end{theorem}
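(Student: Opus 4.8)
The plan is to split $x>0$ into a bounded initial range treated by direct computation and an unbounded tail treated by explicit analytic estimates, the point being that the constant $1.01624$ sits just above the true global maximum of $\vartheta(x)/x$, which occurs at a small prime.

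I would begin with an elementary reduction. Since $\vartheta$ is a nondecreasing step function jumping by $\log p$ at each prime $p$, while $x\mapsto\vartheta(x)/x$ is strictly decreasing on each interval between consecutive primes, every local maximum of $\vartheta(x)/x$ occurs immediately after a jump. Hence $\sup_{x>0}\vartheta(x)/x=\sup_{p\text{ prime}}\vartheta(p)/p$, and it suffices to bound $\vartheta(p)/p$ over primes. Computing the partial sums $\vartheta(p)=\sum_{q\leqslant p}\log q$ for all primes $p$ up to an explicit threshold $x_1$ then verifies $\vartheta(p)<1.01624\,p$ on the initial range; this is where the inequality is essentially sharp, the maximum being attained at a comparatively small prime, which is precisely what fixes the numerical value of the constant.

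The analytic heart of the argument is the tail $x>x_1$, where I would prove the stronger relative bound $\vartheta(x)\leqslant(1+\varepsilon(x))\,x$ with $\varepsilon(x)\to 0$, so that $\vartheta(x)/x<1.01624$ as soon as $\varepsilon(x)<0.01624$. I would work first with the second Chebyshev function $\psi(x)=\sum_{p^k\leqslant x}\log p$, for which the explicit formula $\psi(x)=x-\sum_\rho x^\rho/\rho-\log(2\pi)-\tfrac12\log(1-x^{-2})$ expresses the error $\psi(x)-x$ as a sum over the nontrivial zeros $\rho$ of the Riemann zeta function. Bounding that sum requires two inputs: a numerical verification that all zeros up to some height $T$ lie on the critical line, which controls the low-lying zeros sharply, and a classical explicit zero-free region $\Re\rho<1-1/(R\log|\Im\rho|)$, which controls the remaining zeros and forces $\varepsilon(x)\to 0$. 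Passing from $\psi$ back to $\vartheta$ via $\vartheta(x)=\psi(x)-\sum_{k\geqslant 2}\vartheta(x^{1/k})$ introduces only an $O(\sqrt{x}\,)$ correction, negligible relative to $x$.

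The main obstacle is exactly this tail estimate: the zero-free region alone makes $\varepsilon(x)$ decay so slowly (roughly like $\exp(-c\sqrt{\log x})$) that the threshold beyond which $\varepsilon(x)<0.01624$ is astronomically large, far out of reach of direct computation. Bridging the gap between where the partial-sum computation stops and where the asymptotic bound becomes small enough is what forces the use of the sharper, zero-verification-based estimates over a very large intermediate range, and it is the reason the inequality is quoted from Rosser and Schoenfeld rather than reproved here.
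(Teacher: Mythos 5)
The paper contains no proof of this statement: it is imported verbatim as Theorem 9 of Rosser--Schoenfeld \cite{RS62a}, so there is no internal argument to compare yours against, and your closing observation that the result is quoted rather than reproved is exactly right. Judged against the actual Rosser--Schoenfeld proof, your two-range architecture (direct verification up to a threshold, explicit-formula estimates with verified zeros plus a classical zero-free region beyond it, and the passage $\vartheta(x)=\psi(x)-\sum_{k\geqslant2}\vartheta(x^{1/k})$ costing only $O(\sqrt{x})$) is the correct shape of the argument, and your reduction of $\sup_{x>0}\vartheta(x)/x$ to $\sup_p\vartheta(p)/p$ is valid.

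However, your framing contains a genuine factual error that would derail the plan as you state it: the claim that $1.01624$ sits just above the global maximum of $\vartheta(x)/x$, attained at a small prime, is false. Numerically $\vartheta(p)/p<1$ for every prime in any computable range (e.g.\ $\vartheta(113)/113\approx0.947$), and Rosser--Schoenfeld's computations give $\vartheta(x)<x$ for all $0<x\leqslant10^8$; the constant $1.01624$ comes \emph{entirely} from the analytic tail, being (up to rounding) the value of $\varepsilon$ in their bound $\vartheta(x)<(1+\varepsilon)x$ at the crossover threshold $x=10^8$. You appear to have transferred to $\vartheta$ the behaviour of $\psi$: it is $\psi(x)/x$ whose maximum $1.03883$ is attained at the small argument $x=113$, which fixes the constant in their Theorem 12, $\psi(x)<1.03883x$. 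Moreover, by Littlewood's oscillation theorem $\vartheta(x)-x$ changes sign only at astronomically large $x$, so no finite computation can ``locate the maximum'' of $\vartheta(x)/x$; your prescription of choosing the threshold $x_1$ by finding where the inequality is essentially sharp is therefore not implementable. The fix is to decouple the two ranges: verify the much stronger $\vartheta(x)<x$ up to the analytic threshold, and accept that the stated constant measures the quality of the explicit zero-based estimate at that threshold rather than any extremal value of the function itself.
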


The following elementary upper bound on the number of automorphisms of a finite group will also be used:

\begin{lemma}\label{autBoundLem}
Let $G$ be a finite group. Then $|\Aut(G)|\leqslant|G|^{\log(|G|)/\log{2}}$.
\end{lemma}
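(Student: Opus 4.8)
The plan is to bound $|\Aut(G)|$ by the number of ways an automorphism can act on a small generating set, and to control the size of such a set logarithmically in $|G|$. The whole argument rests on two elementary facts, so I expect no real obstacle beyond getting the doubling estimate stated cleanly.

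First I would recall that every finite group $G$ can be generated by at most $\log(|G|)/\log{2}$ elements. To see this, fix a minimal generating set $\{g_1,\ldots,g_d\}$ with $d=d(G)$ and consider the chain of subgroups $\{1_G\}=H_0\leqslant H_1\leqslant\cdots\leqslant H_d=G$, where $H_i:=\langle g_1,\ldots,g_i\rangle$. By minimality, each inclusion $H_{i-1}\leqslant H_i$ is proper: if $H_{i-1}=H_i$, then $g_i\in\langle g_1,\ldots,g_{i-1}\rangle$, and $g_i$ could be dropped, contradicting $d=d(G)$. Hence $[H_i:H_{i-1}]\geqslant 2$, so $|H_i|\geqslant 2|H_{i-1}|$ by Lagrange's theorem, and iterating gives $|G|\geqslant 2^d$. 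Taking logarithms yields $d\leqslant\log(|G|)/\log{2}$.

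Next I would use that an automorphism $\alpha\in\Aut(G)$ is completely determined by the tuple $(g_1^{\alpha},\ldots,g_d^{\alpha})$, since the images of a generating set determine $\alpha$ on all of $G$. As each $g_i^{\alpha}$ is an element of $G$, there are at most $|G|^d$ such tuples, whence $|\Aut(G)|\leqslant|G|^d$. Combining this with the bound on $d$ from the previous step gives
\[
|\Aut(G)|\leqslant|G|^d\leqslant|G|^{\log(|G|)/\log{2}},
\]
which is exactly the asserted inequality. The only point requiring any care is the strictness of the subgroup chain coming from a minimal generating set, which is what makes the doubling estimate $|G|\geqslant 2^{d(G)}$ valid; everything else is immediate.
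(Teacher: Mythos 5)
Your proof is correct and follows essentially the same route as the paper: both bound $d(G)\leqslant\log(|G|)/\log 2$ via a strictly increasing subgroup chain built from a minimal generating set (the paper writes the chain descending, you write it ascending, which is immaterial), and both then use that an automorphism is determined by its values on the generators, giving $|\Aut(G)|\leqslant|G|^{d(G)}$. Your explicit verification that minimality forces each inclusion $H_{i-1}<H_i$ to be proper is a point the paper leaves implicit, but the argument is otherwise identical.
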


\begin{proof}
Let $S=\{x_1,\ldots,x_{d(G)}\}$ be a (necessarily minimal) generating subset of $G$ of size $d(G)$. Then, setting $G_i:=\langle x_i,\ldots,x_{d(G)}\rangle$ for $i=1,\ldots d(G)+1$, we obtain a subgroup series $G=G_1>G_2>\cdots>G_{d(G)}>G_{d(G)+1}=\{1_G\}$. By Lagrange's theorem, $|G_{i+1}|\geqslant 2|G_i|$ for each $i\in\{1,\ldots,d(G)\}$, and so $|G|\geqslant 2^{d(G)}$, whence $|S|=d(G)\leqslant\frac{\log{|G|}}{\log{2}}$. The function which assigns to each automorphism of $G$ its restriction to $S$ is an injection, and so $|\Aut(G)|$ is at most the number of functions $S\rightarrow G$, which is exactly $|G|^{|S|}\leqslant|G|^{\log(|G|)/\log{2}}$.
\end{proof}

Finally, we will need generalizations of the concepts of a \enquote{standard tuple} and of the \enquote{power-commutator tuple} associated to a standard tuple as defined in the proof of Proposition \ref{maol2Prop}.

\begin{definition}\label{standardTupleDef}
Consider the following concepts:
\begin{enumerate}
\item Let $p$ be a prime, and let $P$ be a finite abelian $p$-group. Write $P\cong\IZ/p^{e_1}\IZ\times\cdots\times\IZ/p^{e_m}\IZ$ with $1\leqslant e_1\leqslant\cdots\leqslant e_m$. For $n\in\IN^+$ with $n\geqslant m$, a \emph{length $n$ standard generating tuple of $P$} is an $n$-tuple $(x_1,\ldots,x_n)\in P^n$ such that $P=\langle x_1,\ldots,x_n\rangle$ and for $i\in\{1,\ldots,n\}$,
\[
\ord(x_i)=\begin{cases}p^{e_i}, & \text{if }i\leqslant m, \\ 1, & \text{if }i>m.\end{cases}
\]
\item Let $H$ be a finite abelian group, say with $d(H)=n$. For $k\in\IN^+$, denote by $p_k$ the $k$-th prime, and by $P_k$ the Sylow $p_k$-subgroup of $H$. Hence up to isomorphism, we can write $H=\prod_{k\geqslant1}{P_k}$. A \emph{standard generating tuple of $H$} is an $n$-tuple $(h_1,\ldots,h_n)\in H^n$ such that $H=\langle h_1,\ldots,h_n\rangle$ and for each $k\geqslant1$, the entry-wise projection of $(h_1,\ldots,h_n)$ to $P_k$ is a length $n$ standard generating tuple of $P_k$.
\item Let $G$ be a finite group, and let $n:=d(G/G')$. A \emph{standard tuple in $G$} is an $n$-tuple $(g_1,\ldots,g_n)\in G^n$ whose entry-wise image under the canonical projection $G\rightarrow G/G'$ is a standard generating tuple of $G/G'$.
\end{enumerate}
\end{definition}

\begin{remark}\label{standardTupleRem}
Let $H$ be a finite abelian group, and let $G$ be an arbitrary finite group.
\begin{enumerate}
\item All standard generating tuples of $H$ are polycyclic generating sequences of $H$, and they all induce the same power-commutator presentation of $H$. Moreover, any polycylic generating sequence of $H$ inducing this said power-commutator presentation is a standard generating tuple. Hence $\Aut(H)$ acts $1$-transitively on the standard generating tuples of $H$, and so the number of standard generating tuples of $H$ is exactly $|\Aut(H)|$.
\item The number of standard tuples in $G$ is exactly $|\Aut(G/G')|\cdot|G'|^{d(G/G')}$.
\item For each standard tuple $(g_1,\ldots,g_n)$ in $G$, we have $G=\langle g_1,\ldots,g_n,G'\rangle$.
\end{enumerate}
\end{remark}

\begin{definition}\label{powerCommDef}
Let $G$ be a finite group, let $n:=d(G/G')$, and let $(g_1,\ldots,g_n)$ be a standard tuple in $G$.
\begin{enumerate}
\item The \emph{power-automorphism-commutator tuple associated with $(g_1,\ldots,g_n)$} is the $(2n+{n\choose 2})$-tuple
\[
(\pi_1,\ldots,\pi_n,\alpha_1,\ldots,\alpha_n,\gamma_{1,1},\gamma_{1,2},\ldots,\gamma_{1,n},\gamma_{2,3},\gamma_{2,4},\ldots,\gamma_{2,n},\ldots,\gamma_{n-1,n})
\]
with entries in $G'\cup\Aut(G')$ such that
\begin{itemize}
\item $\pi_i=g_i^{\ord_{G/G'}(g_iG')}\in G'$ for $i=1,\ldots,n$,
\item $\alpha_i\in\Aut(G')$ is the automorphism induced through conjugation by $g_i$ for $i=1,\ldots,n$, and
\item $\gamma_{i,j}=[g_i,g_j]\in G'$ for $1\leqslant i<j\leqslant n$.
\end{itemize}
\item Two standard tuples in $G$ are called \emph{equivalent} if and only if they have the same associated power-automorphism-commutator tuple.
\end{enumerate}
\end{definition}

\begin{remark}\label{powerCommRem}
Let $G$ be a finite group, let $H:=G/G'$, and let $n:=d(G/G')$. Every standard generating tuple $(h_1,\ldots,h_n)$ of $H$ is a polycyclic generating sequence of $H$, with respect to which $H$ has the power-commutator presentation
\[
H=\langle x_1,\ldots,x_n\mid x_i^{\ord(h_i)}=1\text{ for }i=1,\ldots,n; [x_i,x_j]=1\text{ for }1\leqslant i<j\leqslant n\rangle.
\]
Now, let $(c_1,\ldots,c_m)$ be a fixed generating tuple of $G'$, with respect to which $G'$ has the presentation
\[
G'=\langle y_1,\ldots,y_m\mid \rho_j=1\text{ for }j=1,\ldots,k\rangle
\]
with $\rho_j$ an element of the free group $\F(y_1,\ldots,y_m)$ for $j=1,\ldots,k$. Then with respect to any (generating) $(m+n)$-tuple of the form $(g_1,\ldots,g_n,c_1,\ldots,c_m)\in G^{m+n}$, where $(g_1,\ldots,g_n)$ is a standard tuple in $G$, the group $G$ has a presentation of the form
\begin{align*}
G=\langle x_1,\ldots,x_n,y_1,\ldots,y_m\mid &\rho_j=1\text{ for }j=1,\ldots,m; x_i^{o_i}=w_i\text{ for }i=1,\ldots,n; \\
& [x_i,x_j]=w_{i,j}\text{ for }1\leqslant i<j\leqslant n; \\
&y_k^{x_i}=v_{i,k}\text{ for }i=1,\ldots,n\text{ and }k=1,\ldots,m\rangle,
\end{align*}
where $o_i$ denotes the common order of the $i$-th entry of any standard generating tuple of $H=G/G'$, and $w_i,w_{i,j},v_{i,k}\in\F(y_1,\ldots,y_m)$.

From this, it is clear that any two equivalent standard tuples in $G$ lie in the same orbit of the component-wise action of $\Aut(G)$; in fact, they are conjugate under an automorphism of $G$ which fixes $G'$ element-wise.
\end{remark}

We are now ready for the

\begin{proof}[Proof of Theorem \ref{mainTheo}(3)]
Let $G$ be a finite group with $\maol(G)\leqslant c$ and $d(G)\leqslant d$. Then in particular, all conjugacy classes of $G$ are of length at most $c$. It follows from Theorem \ref{gmTheo} that
\[
|G'|\leqslant c^{\frac{1}{2}(7+\log{c})}.
\]
Our goal will thus be to bound $|G:G'|$ explicitly from above in terms of $c$ and $d$. First, we show the following

\begin{mdframed}
Claim: If $p$ is a prime divisor of $|G:G'|$, then
\[
p\leqslant A(c,d)+1=c^{d+\frac{1}{2}(7+\log{c})\left({d \choose 2}+\frac{d}{2\log{2}}\cdot(7+\log{c})\log{c}\right)}+1.
\]
\end{mdframed}

In order to prove the Claim, observe that by Corollary \ref{hrCor} and Remark \ref{standardTupleRem}(2), the number of standard tuples in $G$ is at least
\[
(p-1)\cdot|G'|^{d(G/G')}.
\]
On the other hand, in view of Lemma \ref{autBoundLem}, the number of equivalence classes of standard tuples in $G$ is at most
\[
|G'|^{d(G/G')+d(G/G')\frac{\log{|G'|}}{\log{2}}+{d(G/G') \choose 2}}.
\]
It follows that there is an equivalence class of standard tuples in $G$ which is of size at least
\[
\frac{(p-1)|G'|^{d(G/G')}}{|G'|^{d(G/G')+d(G/G')\frac{\log{|G'|}}{\log{2}}+{d(G/G') \choose 2}}}=\frac{p-1}{|G'|^{d(G/G')\frac{\log{|G'|}}{\log{2}}+{d(G/G') \choose 2}}}.
\]
On the other hand, since $c\geqslant\maol(G)$, all $\Aut(G)$-orbits on $d(G/G')$-tuples over $G$ are of length at most $c^{d(G/G')}$. In view of Remark \ref{powerCommRem}, it follows that
\[
c^{d(G/G')}\geqslant\frac{p-1}{|G'|^{d(G/G')\frac{\log{|G'|}}{\log{2}}+{d(G/G') \choose 2}}},
\]
and hence
\[
p \leqslant c^{d(G/G')}\cdot|G'|^{d(G/G')\frac{\log{|G'|}}{\log{2}}+{d(G/G') \choose 2}}+1 \leqslant c^d\cdot c^{\frac{1}{2}(7+\log{c})(d\frac{\frac{1}{2}(7+\log{c})\log{c}}{\log{2}}+{d\choose 2})}+1,
\]
as asserted by the Claim.

Now that the Claim has been proved, let $f$ denote the largest exponent $e$ occurring in the (essentially unique) direct factor decomposition of $G/G'$ into primary cylic groups $\IZ/p^e\IZ$. Then by the above Claim and the fact that $G/G'$ is $d$-generated, we have (letting the variable $p$ range over primes)
\[
\frac{|G|}{c^{\frac{1}{2}(7+\log{c})}}\leqslant|G:G'|\leqslant\prod_{p\leqslant A(c,d)+1}{p^{df}}=\exp(\vartheta(A(c,d)+1)\cdot df),
\]
and thus, in view of Theorem \ref{rsTheo},
\begin{equation}\label{astEq}
f \geqslant \frac{\log{|G|}-\frac{1}{2}(7+\log{c})\log{c}}{d\cdot\vartheta(A(c,d)+1)} \geqslant \frac{\log{|G|}-\frac{1}{2}(7+\log{c})\log{c}}{1.01624d\cdot(A(c,d)+1)} =: g(|G|,c,d).
\end{equation}
By Corollary \ref{hrCor} and Remark \ref{standardTupleRem}(2), the number of standard tuples in $G$ is at least
\[
2^{f-1}\cdot|G'|^{d(G/G')} \geqslant 2^{g(|G|,c,d)-1}\cdot|G'|^{d(G/G')}.
\]
On the other hand, the number of equivalence classes of standard tuples in $G$ is at most
\[
|G'|^{d(G/G')+d(G/G')\frac{\log{|G'|}}{\log{2}}+{d(G/G') \choose 2}}.
\]
It follows that there is an equivalence class of standard tuples in $G$ which is of size at least
\[
\frac{2^{g(|G|,c,d)-1}\cdot|G'|^{d(G/G')}}{|G'|^{d(G/G')+d(G/G')\frac{\log{|G'|}}{\log{2}}+{d(G/G') \choose 2}}}=\frac{2^{g(|G|,c,d)-1}}{|G'|^{d(G/G')\frac{\log{|G'|}}{\log{2}}+{d(G/G') \choose 2}}}.
\]
But again, since $\maol(G)\leqslant c$, the length of an $\Aut(G)$-orbit on $d(G/G')$-tuples over $G$ cannot exceed $c^{d(G/G')}$, and so, in view of Remark \ref{powerCommRem},
\[
c^{d(G/G')}\geqslant\frac{2^{g(|G|,c,d)-1}}{|G'|^{d(G/G')\frac{\log{|G'|}}{\log{2}}+{d(G/G') \choose 2}}},
\]
which implies that
\begin{align*}
2^{g(|G|,c,d)-1} &\leqslant c^{d(G/G')}\cdot|G'|^{d(G/G')\frac{\log{|G'|}}{\log{2}}+{d(G/G') \choose 2}}\leqslant c^d\cdot c^{\frac{1}{2}(7+\log{c})\left(d\frac{\frac{1}{2}(7+\log{c})\log{c}}{\log{2}}+{d \choose 2}\right)} \\
&=A(c,d).
\end{align*}
It follows that
\[
g(|G|,c,d)\leqslant \frac{\log{A(c,d)}}{\log{2}}+1,
\]
or, equivalently (in view of the definition of $g(|G|,c,d)$ in Formula (\ref{astEq}) above),
\[
\log{|G|}\leqslant1.01624d\cdot\left(A(c,d)+1\right)\cdot\left(\frac{\log{A(c,d)}}{\log{2}}+1\right)+\frac{1}{2}(7+\log{c})\log{c},
\]
which is just what we needed to show.
\end{proof}

\section{Finite groups \texorpdfstring{$G$}{G} with \texorpdfstring{$\maol(G)\leqslant23$}{maol(G)<=23}}\label{sec5}

This section is concerned with the proof of Theorem \ref{mainTheo}(4). Let us first introduce a shorthand notation for a concept that was already implicit in the previous section:

\begin{notation}\label{mcclNot}
Let $G$ be a finite group. We denote by
\[
\mccl(G):=\max_{g\in G}{|g^G|}
\]
the maximum conjugacy class length of $G$.
\end{notation}

The following lemma will prove useful in our proof of Theorem \ref{mainTheo}(4):

\begin{lemma}\label{crLem}
Let $T$ be a finite group that can be written as a nonempty direct product of nonabelian finite simple groups. Assume that $\mccl(T)\leqslant23$. Then $T\cong\Alt(5)$.
\end{lemma}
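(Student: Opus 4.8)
The plan is to reduce the statement to a classification of single simple factors and then to a finite check. First I would record that $\mccl$ is multiplicative across direct products: if $T=S_1\times\cdots\times S_k$, then every conjugacy class of $T$ is a Cartesian product of conjugacy classes of the $S_i$, so $\mccl(T)=\prod_{i=1}^k\mccl(S_i)$. Since each factor contributes $\mccl(S_i)\geqslant1$, this forces $\mccl(S_i)\leqslant\mccl(T)\leqslant23$ for every $i$. Hence it suffices to show that the only nonabelian finite simple group $S$ with $\mccl(S)\leqslant23$ is $\Alt(5)$: granting this, we get $T\cong\Alt(5)^k$, and since $\mccl(\Alt(5))=20$ (the class of a $3$-cycle, which is the largest class of $\Alt(5)$), multiplicativity gives $20^k=\mccl(T)\leqslant23$, whence $k=1$ and $T\cong\Alt(5)$.

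Second, I would bound the order of such a simple factor. A nonabelian simple group is perfect, so $S=S'$, and Theorem~\ref{gmTheo} (Guralnick--Mar\'oti) applied with $\ell=\mccl(S)\leqslant23$ yields
\[
|S|=|S'|\leqslant 23^{\frac{1}{2}(7+\log23)}<8\cdot10^6 .
\]
Thus $S$ ranges over the (explicitly known) nonabelian finite simple groups of order below $8\cdot10^6$. To trim this list by hand, I would also use that $\zeta S$ is trivial, so for any $g\neq1_S$ the centralizer $\C_S(g)$ is a proper, hence core-free, subgroup of index $|g^S|\leqslant23$; the coset action therefore embeds $S$ into $\Alt(m)$ with $m\leqslant23$. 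In other words $S$ has a faithful transitive permutation representation of degree at most $23$, which already restricts $S$ to a short list of alternating groups $\Alt(m)$ ($m\leqslant23$), small groups $\PSL(2,q)$, a few further classical groups, and the sporadic groups $\M_{11},\M_{22},\M_{23}$.

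Finally, I would carry out the finite verification: for each candidate $S$ on this list, compute $\mccl(S)$ (directly from the class sizes, or via the character table library in GAP~\cite{GAP4}) and check that $\mccl(S)>23$ unless $S\cong\Alt(5)$. For instance $\mccl(\PSL(2,7))=56$ and $\mccl(\Alt(6))=90$, and in the infinite families the largest class size grows well past $23$ almost immediately (e.g.\ regular semisimple or unipotent classes in $\PSL(2,q)$ have size of order $q^2$), so only the smallest members need genuine inspection. The main obstacle is precisely this finite case analysis: one must be sure the candidate list is complete and that every group other than $\Alt(5)$ is excluded; the permutation-degree reduction and the rapid growth of conjugacy class sizes in the simple families are what keep the check manageable and limit the reliance on pure computation.
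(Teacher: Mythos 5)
Your proof is correct and follows essentially the same route as the paper: reduce to a single simple factor via the multiplicativity of conjugacy class sizes over direct products, observe that an element centralizer of index at most $23$ yields a faithful transitive permutation representation of degree at most $23$, and finish with a finite check of the resulting candidate list. The paper organizes that finite check slightly differently: it disposes of the sporadic groups via the ATLAS \cite{CCNPW85a}, of the alternating groups $\Alt(m)$, $m\geqslant 6$, via the explicit $3$-cycle class of size $2\binom{m}{3}\geqslant 40$, and invokes the minimal-degree tables of \cite{GMPS15a} only for the groups of Lie type, arriving at a short list of small projective special linear groups to test in GAP \cite{GAP4}; your uniform use of the degree-$\leqslant 23$ embedding across all families is equally valid and arguably more streamlined, while your additional Guralnick--Mar\'oti bound $|S|<8\cdot 10^6$ from Theorem \ref{gmTheo} is correct but redundant once the degree bound is in place. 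One small slip in your illustrative enumeration: the sporadic simple groups with minimal faithful permutation degree at most $23$ are $\M_{11}$, $\M_{12}$, $\M_{22}$ and $\M_{23}$ --- you omitted $\M_{12}$ (minimal degree $12$), which is excluded neither by your order bound nor by your degree bound. This does not affect the conclusion, since $\mccl(\M_{12})$ far exceeds $23$, but as you yourself observe, completeness of the candidate list is the crux of the finite verification, so it must be compiled with care.
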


\begin{proof}
We first show the following, weaker

\begin{mdframed}
Claim: Let $S$ be a nonabelian finite simple group with $\mccl(S)\leqslant23$. Then $S\cong\Alt(5)$.
\end{mdframed}

Using the ATLAS of Finite Groups \cite{CCNPW85a}, one can check that $\mccl(S)>23$ for all sporadic finite simple groups $S$. Moreover, if $S=\Alt(m)$ with $m\geqslant6$, then the length of the $S$-conjugacy class of any $3$-cycle in $S$ is
\[
2\cdot{m \choose 3}=\frac{m(m-1)(m-2)}{3}\geqslant\frac{6\cdot 5\cdot 4}{3}=40>23.
\]
It remains to show that if $S$ is a nonabelian finite simple group of Lie type with $\mccl(S)\leqslant23$, then $S\cong A_1(4)\cong A_1(5)\cong\Alt(5)$. To that end, note that if $\mccl(S)\leqslant23$, then $S$ has a proper subgroup (namely an element centralizer) of index at most $23$, and so $m(S)\leqslant23$, where $m(S)$ denotes the minimum faithful permutation representation degree of $S$ (or, equivalently, the smallest index of a maximal subgroup of $S$). The values of $m(S)$ when $S$ is a finite simple group of Lie type can be found in \cite[Table 4, p.~7682]{GMPS15a}, and using this information, it is easy to check that $m(S)\leqslant23$ unless $S\cong A_d(q)\cong\PSL_{d+1}(q)$ with $(d,q)$ from the set
\[
\{(1,5),(1,7),(1,8),(1,9),(1,11),(1,13),(1,16),(1,17),(1,19),(2,2),(2,3),(2,4),(3,2)\}.
\]
By going through this finite list of groups $S$ and computing $\mccl(S)$ with GAP \cite{GAP4}, one finds that indeed, $S=A_1(5)\cong\Alt(5)$ is the only nonabelian finite simple group with $\mccl(S)\leqslant23$.

Now that the Claim is proved, we can conclude as follows: Write $T=S_1^{n_1}\times\cdots\times S_r^{n_r}$ where $S_1,\ldots,S_r$ are pairwise nonisomorphic nonabelian finite simple groups and $n_1,\ldots,n_r\in\IN^+$. Then, since the conjugacy classes of a direct product $G_1\times G_2$ are just the Cartestian products of the conjugacy classes of $G_1$ with the conjugacy classes of $G_2$, we find that
\[
23\geqslant\mccl(T)=\prod_{i=1}^r{\mccl(S_i)^{n_i}}.
\]
Hence, by the above Claim, we have $r=1$ and $S_1=\Alt(5)$, so $T\cong\Alt(5)^n$ for some $n\in\IN^+$. But if $n\geqslant 2$, then
\[
23\geqslant\mccl(T)=\mccl(\Alt(5))^n=20^n\geqslant 20^2=400>23,
\]
a contradiction. Therefore, $T\cong\Alt(5)$, as we needed to show.
\end{proof}

We are now ready for the

\begin{proof}[Proof of Theorem \ref{mainTheo}(4)]
We proceed by contradiction: Assume that $G$ is a finite nonsolvable group with $\maol(G)\leqslant23$. Recall the facts on finite semisimple groups listed in Subsection \ref{subsecPrep4}. We have that $G/\Rad(G)$ is a nontrivial finite semisimple group, and
\[
23\geqslant\maol(G)\geqslant\mccl(G)\geqslant\mccl(G/\Rad(G))\geqslant\mccl(\Soc(G/\Rad(G))).
\]
Since $\Soc(G/\Rad(G))$ is a nonempty direct product of nonabelian finite simple groups, Lemma \ref{crLem} yields that $\Soc(G/\Rad(G))\cong\Alt(5)$, and thus $G/\Rad(G)$ is isomorphic to either $\Alt(5)$ or $\Sym(5)$.  However,
\[
\mccl(\Sym(5))=24>23,
\]
so we conclude that $G/\Rad(G)\cong\Alt(5)$. We now show the following

\begin{mdframed}
Claim 1: Let $x\in G/\Rad(G)$, and let $\tilde{x}$ be a lift of $x$ in $G$. Then the conjugacy class $\tilde{x}^G$ consists of exactly one element from each of the cosets of $\Rad(G)$ which correspond to the elements of the conjugacy class $x^{G/\Rad(G)}$. In particular, $\Rad(G)=\zeta G$.
\end{mdframed}

For the proof of Claim 1, assume first that $x$ is a \emph{nontrivial} element of $G/\Rad(G)$. Then since $G/\Rad(G)\cong\Alt(5)$, we have
\[
|x^{G/\Rad(G)}|\geqslant 12>\frac{23}{2}.
\]
Hence the conjugacy class length $|\tilde{x}^G|$, being a multiple of $|x^{G/\Rad(G)}|$, must be equal to $|x^{G/\Rad(G)}|$, and the assertion follows for $x$. As for $x=1_{G/\Rad(G)}$, the assertion is equivalent to $\Rad(G)\leqslant\zeta G$, which we can prove as follows. Fix a nontrivial element $y\in G/\Rad(G)$, let $\tilde{y}$ be a lift of $y$ in $G$, and let $r\in\Rad(G)$ be arbitrary. Then
\[
(\tilde{y}r)^{\tilde{y}}=\tilde{y}r^{\tilde{y}}\in\tilde{y}\Rad(G)\cap(\tilde{y}r)^G=(\tilde{y}r)\Rad(G)\cap(\tilde{y}r)^G=\{\tilde{y}r\},
\]
whence $r^{\tilde{y}}=r$. This shows that $\C_G(r)$ contains all of $G\setminus\Rad(G)$, and thus $\C_G(r)=G$, i.e., $r\in\zeta G$. This concludes the proof of the main assertion, which involved showing that $\Rad(G)\leqslant\zeta G$. As for the \enquote{In particular}, namely that $\Rad(G)=\zeta G$, just use that $G/\Rad(G)\cong\Alt(5)$ is centerless.

Claim 1 implies the following

\begin{mdframed}
Claim 2: Let $g_1,g_2\in G$. Then $g_1$ and $g_2$ commute if and only if their images in $G/\Rad(G)$ commute.
\end{mdframed}

Note that the implication \enquote{$\Rightarrow$} in Claim 2 is trivial, so we focus on proving the implication \enquote{$\Leftarrow$}. Let $x_1$ and $x_2$ be commuting elements of $G/\Rad(G)$, and let $\widetilde{x_1}$ and $\widetilde{x_2}$ be lifts in $G$ of $x_1$ and $x_2$ respectively. We need to show that $\widetilde{x_1}$ and $\widetilde{x_2}$ commute. Since $x_1$ and $x_2$ commute in $G/\Rad(G)$, we conclude that
\[
\widetilde{x_1}^{\widetilde{x_2}} \in \widetilde{x_1}\Rad(G)\cap\widetilde{x_1}^G=\{\widetilde{x_1}\},
\]
where the equality is by Claim 1. Hence $\widetilde{x_1}^{\widetilde{x_2}}=\widetilde{x_1}$, which just means that $\widetilde{x_1}$ and $\widetilde{x_2}$ commute, as we wanted to show.

By Claim 2 and the facts that $\Rad(G)=\zeta G$ (see Claim 1) and that the Sylow subgroups of $G/\Rad(G)\cong\Alt(5)$ are abelian, it follows that the Sylow subgroups of $G$ are abelian. Hence, by \cite[result 10.1.7, p.~289]{Rob96a}, we have
\[
G'\cap\Rad(G)=G'\cap\zeta G=\{1_G\}.
\]
But since $G/\Rad(G)\cong\Alt(5)$ is perfect, $G=\langle\Rad(G),G'\rangle$, whence
\[
G=\Rad(G)\times G'.
\]
It follows that
\[
G'\cong G/\Rad(G)\cong\Alt(5),
\]
and by Lemma \ref{simpleLem}(1), we find that
\[
23\geqslant\maol(G)\geqslant\maol(G')=\maol(\Alt(5))=24,
\]
a contradiction.
\end{proof}

\section{Concluding remarks}\label{sec6}

We conclude this paper with some related open problems for further research. Arguably the most glaring open problem, arising when comparing statements (1) and (2) of Theorem \ref{mainTheo}(1), is the following:

\begin{problem}\label{open1}
Detemine the largest positive integer $c_0$ such that there are only finitely many finite groups $G$ with $\maol(G)\leqslant c_0$ (and, if possible, list those finitely many $G$).
\end{problem}

Observe that by Theorem \ref{mainTheo}(1,2), we have $c_0\in\{3,4,5,6,7\}$. The next problem is motivated by the fact that the $2$-groups discussed in Section \ref{sec3} \enquote{just} fail to have the property that all their automorphisms are central:

\begin{question}\label{open2}
Do there exist infinitely many finite groups $G$ with $|\zeta G|=4$ such that all automorphisms of $G$ are central?
\end{question}

If the answer to Question \ref{open2} is \enquote{yes}, then by Theorem \ref{mainTheo}(1), the constant $c_0$ from Problem \ref{open1} is $3$, and Problem \ref{open1} is solved completely by Theorem \ref{mainTheo}(1).

Finally, we would like to pose the following related problem on permutation groups:

\begin{problem}\label{open3}
Let $G\leqslant\Sym(\Omega)$ be a permutation group of finite degree, and set
\[
\maol_{\perm}(G):=\max_{g\in G}{|g^{\N_{\Sym(\Omega)}(G)}|}.
\]
Determine the largest non-negative integer $c_1$ such that all finite-degree permutation groups $G$ with $\maol_{\perm}(G)\leqslant c_1$ have constantly bounded order, and, if possible, classify those $G$. Is $c_1=c_0$, with $c_0$ as in Problem \ref{open1}?
\end{problem}


\begin{thebibliography}{1}

\bibitem{BD16a}
R.~Bastos and A.C.~Dantas,
On Finite Groups with Few Automorphism Orbits,
\emph{Comm.~Algebra} \textbf{44}(7):2953--2958, 2016.

\bibitem{BW79a}
N.L.~Biggs and A.T.~White,
\emph{Permutation Groups and Combinatorial Structures},
Cambridge University Press (London Mathematical Society Lecture Note Series, 33), Cambridge, 1979 (reprinted 2008).

\bibitem{Bor19a}
A.~Bors,
Finite groups with a large automorphism orbit,
\emph{J.~Algebra} \textbf{521}:331--364, 2019.

\bibitem{BS91a}
R.~Brandl and W.J.~Shi,
Finite Groups Whose Element Orders Are Consecutive Integers,
\emph{J.~Algebra} \textbf{143}:388--400, 1991.

\bibitem{CP93a}
P.J.~Cameron and C.E.~Praeger,
Block-transitive $t$-designs. I. Point-imprimitive designs,
\emph{Discrete Math.} \textbf{118}:33--43, 1993.

\bibitem{CP93b}
P.J.~Cameron and C.E.~Praeger,
Block-transitive $t$-designs. II. Large $t$,
in: F.~De Clerck et al.~(eds.), \emph{Finite geometry and combinatorics},
Cambridge University Press (London Mathematical Society Lecture Note Series, 191), Cambridge, 1993.

\bibitem{CCNPW85a}
J. H. Conway, R. T. Curtis, S. P. Norton, R. A. Parker and R. A. Wilson,
\emph{Atlas of finite groups},
Clarendon Press, Oxford, 1985 (reprinted 2013).

\bibitem{DGB17a}
A.C.~Dantas, M.~Garonzi and R.~Bastos,
Finite groups with six or seven automorphism orbits,
\emph{J.~Group Theory} \textbf{20}(5):945--954, 2017.

\bibitem{GAP4}
The GAP~Group,
\emph{GAP -- Groups, Algorithms, and Programming, Version 4.10.2} (2019),
\url{http://www.gap-system.org}.

\bibitem{Gro57a}
J.~De Groot,
Indecomposable abelian groups,
\emph{Nederl.~Akad.~Wetensch.~Proc.~Ser.~A} \textbf{60}:137--145, 1957.

\bibitem{GMPS15a}
S. Guest, J. Morris, C. E. Praeger and P. Spiga,
On the maximum orders of elements of finite almost simple groups and primitive permutation groups,
\emph{Trans. Amer. Math. Soc.} \textbf{367}(11) (2015), 7665--7694.

\bibitem{GM11a}
R.M.~Guralnick and A.~Mar{\'o}ti,
Average dimension of fixed point spaces with applications,
\emph{Adv.~Math.} \textbf{226}(1):298--308, 2011.

\bibitem{HR07a}
C.J.~Hillar and D.L.~Rhea,
Automorphisms of Finite Abelian Groups,
\emph{Amer.~Math.~Monthly} \textbf{114}(10):917--923, 2007.

\bibitem{HEO05a}
D.~F.~Holt, B.~Eick and E.~A.~O'Brien,
\emph{Handbook of Computational Group Theory},
Discrete Mathematics and its Applications, Chapman \& Hall/CRC, Boca Raton, 2005.

\bibitem{KY18a}
R.D.~Kitture and M.K.~Yadav,
Finite Groups with Abelian Automorphism Groups: A Survey,
in: N.S.N.~Sastry and M.K.~Yadav (eds.), \emph{Group Theory and Computation},
Springer (Indian Statistical Institute Series), Singapore, 2018, pp.~119--140.

\bibitem{LM86a}
T.J.~Laffey and D.~MacHale,
Automorphism orbits of finite groups,
\emph{J.~Austral.~Math.~Soc.~(Ser.~A)} \textbf{40}:253--260, 1986.

\bibitem{LN56a}
W.~Ledermann and B.H.~Neumann,
On the order of the automorphism group of a finite group. I,
\emph{Proc.~Roy.~Soc.~London Ser.~A} \textbf{233}:494--506, 1956.

\bibitem{Mil13a}
G.A.~Miller,
A non-abelian group whose group of isomorphisms is abelian,
\emph{Messenger Math.} \textbf{43}:124--125, 1913--1914.

\bibitem{Neu54a}
B.H.~Neumann,
Groups covered by permutable subsets,
\emph{J.~London Math.~Soc.} \textbf{29}:236--248, 1954.

\bibitem{Rob96a}
D.J.S.~Robinson,
\emph{A Course in the Theory of Groups},
Springer (Graduate Texts in Mathematics, 80), New York, 2nd edn.~1996.

\bibitem{RW84a}
D.J.S.~Robinson and J.~Wiegold,
Groups with boundedly finite automorphism classes,
\emph{Rend.~Sem.~Mat.~Univ.~Padova} \textbf{71}:273--286, 1984.

\bibitem{RS62a}
J.B.~Rosser and L.~Schoenfeld,
Approximate formulas for some functions of prime numbers,
\emph{Illinois J.~Math.} \textbf{6}(1):64--94, 1962.

\bibitem{Stro02a}
M.~Stroppel,
Locally compact groups with few orbits under automorphisms,
\emph{Top.~Proc.} \textbf{26}(2):819--842, 2002.

\bibitem{Tha03a}
K.~Thas,
Finite flag-transitive projective planes: a survey and some remarks,
\emph{Discrete Math.} \textbf{266}:417--429, 2003.

\bibitem{Wie57a}
J.~Wiegold,
Groups with boundedly finite classes of conjugate elements,
\emph{Proc.~Roy.~Soc.~London Ser.~A} \textbf{238}:389--401, 1957.

\bibitem{Win72a}
D.L.~Winter,
The automorphism group of an extraspecial $p$-group,
\emph{Rocky Mountain J.~Math.} \textbf{2}(2):159--168, 1972.

\bibitem{Zha92a}
J.~Zhang,
On Finite Groups All of Whose Elements of the Same Order Are Conjugate in Their Automorphism Groups,
\emph{J.~Algebra} \textbf{153}:22--36, 1992.

\end{thebibliography}
\end{document}